\documentclass{ws-rmp}
\graphicspath{{figures/}}
\usepackage[T1]{fontenc}
\usepackage{dsfont}
\usepackage{booktabs}
\usepackage{amsmath,amssymb}
\usepackage{graphicx}
\usepackage{placeins}
\makeatletter
\let\input@path\Ginput@path
\makeatother
\usepackage{float}
\usepackage{mathtools}
\usepackage{bm}
\usepackage{hyperref}
\usepackage{newunicodechar}
\newunicodechar{–}{--}
\newunicodechar{—}{---}

\newcommand{\rmpincludegraphics}[2][]{%
  \IfFileExists{#2}{\includegraphics[#1]{#2}}{%
    \IfFileExists{#2.pdf}{\includegraphics[#1]{#2.pdf}}{%
      \IfFileExists{#2.png}{\includegraphics[#1]{#2.png}}{%
        \fbox{\parbox{0.9\linewidth}{\centering Missing figure: \detokenize{#2}}}%
      }%
    }%
  }%
}
\newcommand{\rmpbenchsheetgraphics}[1]{%
  \rmpincludegraphics[width=\linewidth]{#1}%
}
\newcommand{\rmpvroverlaygraphics}[1]{%
  \rmpincludegraphics[width=\linewidth]{#1}%
}

\newcommand{\benchfigCoulomb}{rmp_bench_laplace_p6gbm_coulomb}
\newcommand{\benchfigHO}{rmp_bench_laplace_p6gbm_ho}
\newcommand{\benchfigHulthen}{rmp_bench_laplace_p6gbm_hulthen}
\newcommand{\benchfigKratzer}{rmp_bench_laplace_p6gbm_kratzer}
\newcommand{\benchfigHyperbolic}{rmp_bench_laplace_p6gbm_hyperbolic}

\newcommand{\benchvrfigCoulomb}{bench_vr_lsq_p6gbm_coulomb}
\newcommand{\benchvrfigHO}{bench_vr_lsq_p6gbm_ho}
\newcommand{\benchvrfigHulthen}{bench_vr_lsq_p6gbm_hulthen}
\newcommand{\benchvrfigKratzer}{bench_vr_lsq_p6gbm_kratzer}
\newcommand{\benchvrfigHyperbolic}{bench_vr_lsq_p6gbm_hyperbolic}

\makeatletter
\@ifundefined{catchline}{}{\renewcommand{\catchline}[5]{}}
\makeatother
\makeatletter
\def\trimmarks{}
\makeatother

\makeatletter

\providecommand*{\theHremark}{}
\renewcommand*{\theHremark}{\thesection.\arabic{remark}}
\@ifundefined{c@Remark}{}{}
\makeatother
\makeatletter
\renewcommand{\theHequation}{\thesection.\arabic{equation}}
\makeatother

\hypersetup{hidelinks,hypertexnames=false}
\makeatletter
\@ifundefined{remark}{%
\newenvironment{remark}[1][]{\refstepcounter{equation}\par\noindent\textbf{Remark\if\relax\detokenize{#1}\relax\else\ (#1)\fi.}\ }{\par}
}{}
\makeatother

\makeatletter
\@ifundefined{theorem}{}{%
 \@ifundefined{Theorem}{\newenvironment{Theorem}{\begin{theorem}}{\end{theorem}}}{}%
}
\@ifundefined{lemma}{}{%
 \@ifundefined{Lemma}{\newenvironment{Lemma}{\begin{lemma}}{\end{lemma}}}{}%
}
\@ifundefined{corollary}{}{%
 \@ifundefined{Corollary}{\newenvironment{Corollary}{\begin{corollary}}{\end{corollary}}}{}%
}
\@ifundefined{proposition}{}{%
 \@ifundefined{Proposition}{\newenvironment{Proposition}{\begin{proposition}}{\end{proposition}}}{}%
}
\@ifundefined{definition}{}{%
 \@ifundefined{Definition}{\newenvironment{Definition}{\begin{definition}}{\end{definition}}}{}%
}
\@ifundefined{remark}{}{%
 \@ifundefined{Remark}{\newenvironment{Remark}{\begin{remark}}{\end{remark}}}{}%
}
\@ifundefined{example}{}{%
 \@ifundefined{Example}{\newenvironment{Example}{\begin{example}}{\end{example}}}{}%
}
\makeatother

\providecommand{\proofpart}[1]{\par\smallskip\noindent\textit{#1.}\,}
\makeatletter
\@ifundefined{theorem}{}{%
 \@ifundefined{Theorem}{}{}%
}
\@ifundefined{lemma}{}{%
 \@ifundefined{Lemma}{}{}%
}
\@ifundefined{corollary}{}{%
 \@ifundefined{Corollary}{}{}%
}
\@ifundefined{proposition}{}{%
 \@ifundefined{Proposition}{}{}%
}
\@ifundefined{definition}{}{%
 \@ifundefined{Definition}{}{}%
}
\@ifundefined{remark}{}{%
 \@ifundefined{Remark}{}{}%
}
\@ifundefined{example}{}{%
 \@ifundefined{Example}{}{}%
}
\makeatother
\begin{document}
\markboth{Plott, \c{C}etinkaya \& Mukherjee}{Inverse Potential Reconstruction via GBM}

%%%%%%%%%%%%%%%%%%%%% Publisher's Area please ignore %%%%%%%%%%%%%%%
% \catchline{}{}{}{}{}  % (commented to hide template timestamp/jobname)
%%%%%%%%%%%%%%%%%%%%%%%%%%%%%%%%%%%%%%%%%%%%%%%%%%%%%%%%%%%%%%%%%%%%

\title{Inverse Quantum Potential Reconstruction via Generalized Bertlmann--Martin Inequalities}

\author{M.\ Gage Plott}
\address{Department of Mathematics, University of Tennessee at Chattanooga, Chattanooga, TN 37403, USA\\
\email{mgplott13@gmail.com}}

\author{F.\ Ay\c{c}a \c{C}etinkaya}
\address{Department of Mathematics, University of Tennessee at Chattanooga, Chattanooga, TN 37403, USA\\
\email{fatmaayca-cetinkaya@utc.edu}}

\author{Rick Mukherjee}
\address{Department of Physics, University of Tennessee at Chattanooga, Chattanooga, TN 37403, USA\\
\email{rick-mukherjee@utc.edu}}

\maketitle

% Draft-share copy: omit template received/revised placeholders.

\begin{abstract}
Reconstructing a radial (1D) quantum potential, $V(r)$, from a few bound-state energies is a long-standing inverse problem because limited spectral data must constrain an entire potential. We present a Laplace-moment reconstruction pipeline that links the Bertlmann--Martin gap bound to generalized Bertlmann--Martin (GBM) even-moment ladders, continues the Laplace transform with Pad\'e approximants, and inverts the transform to recover $\rho(r)$ and $V(r)$. Odd moments are supplied by a physically consistent interpolation scheme. Benchmark settings and diagnostics for Coulomb, harmonic oscillator, Hulth\'en, Kratzer, and hyperbolic-well cases are stated so each approximation stage can be assessed under a common empirical basis. The conclusions are therefore limited to the reported benchmark settings rather than offered as universal method claims.
\end{abstract}
\keywords{Inverse quantum problems; Radial potentials; Laplace-moment reconstruction; Generalized Bertlmann--Martin (GBM) inequalities; Pad\'e approximants.}

\ccode{Mathematics Subject Classification 2020: 34L40; 81Q05; 44A10; 65R32}

%% --- Body ---

\section{Introduction}

\label{sec:intro}
An inverse problem aims to recover the underlying cause from observed (measured) effects. These problems are generally ill-posed, in the sense that small changes in the data may result in significant changes in the corresponding solution \cite{DoerflerHochbruckKoehlerRiederSchnaubeltWieners2023}. Despite this inherent difficulty, a number of fundamental results demonstrate that meaningful reconstruction is possible under suitable conditions.

One of the earliest results in this direction is due to Ambarzumian \cite{Ambarzumian}, who showed that if the eigenvalues of the boundary value problem
\begin{equation}
  -y^{\prime \prime}+V(x)y=\lambda y, \ y^\prime (0)=y^\prime (\pi)=0
\label{eq:SL-BVP-intro}
\end{equation}
are $\lambda_k = k^2$, $k \geq 0$, then $V=0$. However, this result is exceptional, and in general the specification of spectral data does not uniquely determine the operator. Subsequent results by Borg \cite{Borg} and Levinson \cite{Levinson} established uniqueness under additional spectral information, while Tikhonov \cite{Tikhonov} obtained uniqueness on the half-line via the Weyl function.

Questions of existence and uniqueness for inverse spectral problems have been studied extensively over the past decades \cite{Levitan1987InverseSturmLiouville,FreilingYurko2001InverseSturmLiouville,MamedovCetinkaya2013SpectralParameter,Bondarenko2015MatrixHalfLine,Chen2016InteriorTransmission,ButerinSat2017HalfInverseIntegro,DelgadoKhmelnytskayaKravchenko2019,ZhangBondarenkoYang2021Discontinuous,VladicicBoskovicVojvodic2022Delay,WeiHuXiang2024Reconstruction,GawishMansour2025QSturmLiouville,FeizmohammadiKian2026PartialBoundary}. Alongside these developments, a wide range of numerical reconstruction methods has been developed, including constructive approaches based on the Gel'fand--Levitan--Marchenko (GLM) integral equations \cite{GelfandLevitan,Marchenko,HronRazavy1983GLMApplications,Habashy1991GeneralizedGLM,KravchenkoTorba2021DirectMethod,KravchenkoVicenteBenitez2022TransmutationII,BaevMozgovykh2025MethodSolving}. Although GLM-based methods rest on a rigorous analytical foundation, their practical implementation is often limited by the sensitivity of the integral kernels to incomplete spectral data. Another widely used approach is inverse scattering \cite{Chadan,Newton,KravchenkoShishkinaTorba2020InverseScattering,GradusovYakovlev2023InverseSquareScattering,Gibson2026ExplicitInverseScattering}, in which the potential is reconstructed from scattering data, such as reflection coefficients or phase shifts, through integral equation techniques; however, in many applications, such data may be incomplete or inaccessible. Alternatively, discrete approaches, such as finite difference and spectral methods, reformulate the differential operator as a matrix-based inverse eigenvalue problem \cite{FabianoKnobelLowe1995FiniteDifference,MarlettaWeikard2005WeakStability,FreilingMazurYurko2007Singular,Andrew2011FiniteDifference,GaoHuangCheng2015FiniteDifference,Kravchenko2020DirectInverseBook,Bondarenko2022FrozenArgument,CetinkayaKhmelnytskayaKravchenko2024NSBF,Kravchenko2025WeylHalfLine}; however, accurate reconstruction often demands fine discretizations, which increase computational cost and potentially affect numerical stability.

In contrast, moment-based reconstruction methods \cite{Bertlmann,Mezhoud2003GBM,Yekken,Mezhoud} provide an alternative framework in which the potential is recovered through the moments of the ground-state density derived from spectral information. By avoiding the need for scattering data, large-scale discretization, or the solution of integral equations, these methods offer a comparatively direct and flexible approach to inverse problems. Moreover, accurate reconstruction can often be achieved from a finite set of spectral quantities, making moment-based techniques particularly attractive in settings where available data are limited.

Section~\ref{sec:methodology} develops the pipeline, and Section~\ref{sec:benchmark} benchmarks it on Coulomb, harmonic oscillator, Kratzer, Hulth\'en, and hyperbolic molecular well potentials.
\section{Problem Setting and Preliminaries}
\label{sec:methodology}

In this section, we introduce the mathematical framework underlying the inverse problem and establish the notation used throughout the paper. We begin with the Schr\"odinger equation under spherical symmetry, then define the ground-state density and its associated moments, and finally introduce the transform representation that forms the basis of the reconstruction method.

We consider the non-relativistic Schr\"odinger equation in 3D with spherical symmetry for a local, central potential \(V(r)\)
\begin{equation}
\label{eq:schrodinger_3d_TI}
\left[
-\frac{1}{2}\nabla^{2} + V(\mathbf{r})
\right]
\Psi(\mathbf{r})
= E\,\Psi(\mathbf{r}).
\end{equation}

Denote the ground-state radial wave by \(R_{0,0}(r)\), the reduced radial wavefunction by
\[
\chi_{0,0}(r):=r\,R_{0,0}(r),
\]
and the radial density by
\begin{equation}
 \rho_{0,0}(r) := |R_{0,0}(r)|^2, \qquad r\ge 0,
 \label{eq:rho_def}
\end{equation}
From these definitions, it can be seen that
\[
\int_0^\infty r^2 \rho_{0,0}(r)\,dr = 1.
\]
Define the ground-state moments as
\begin{equation}
 \langle r^n \rangle_{0,0} := \int_0^\infty r^{n+2}\,\rho_{0,0}(r)\,dr,
 \qquad n=0,1,2,\dots,
 \label{eq:moment_def}
\end{equation}
The Laplace transform of \(r^2\rho_{0,0}(r)\) is given by
\begin{equation}
 L(q) := \int_0^\infty e^{-q r}\, r^{2}\rho_{0,0}(r)\,dr
 = \sum_{n\ge 0} \frac{(-1)^n}{n!}\, \langle r^n\rangle_{0,0}\, q^n,
 \qquad q\ge 0.
 \label{eq:L_series}
\end{equation}
The inverse problem is centered on recovering the reduced density \(r^2\rho_{0,0}(r)=\chi_{0,0}^2(r)\) from its Laplace transform. To compute \(L(q)\) without knowing the wave function a priori, one needs a way to find the moments \(\langle r^n\rangle_{0,0}\) from accessible spectral data.

These notation and unit conventions fix the Hartree atomic-units basis used
below, so the active manuscript equations are written directly in Hartree form
with kinetic term \(-\tfrac{1}{2}\nabla^2\). Source formulas written in other
nondimensionalizations are rescaled to this convention before use. To calculate
these moments from accessible spectral data, we rely on the formalism
established in Appendix~A. Following Theorem~1, we perform a radial reduction
\(u(r)=rR(r)\) to map the 3D Schr\"odinger operator onto a radial (1D) form. This
reduction defines the ground-state moments as radial (1D) spatial integrals and,
through Theorem~1, ensures a rigorous ordering of these moments relative to the
system's energy levels while preserving the Hartree kinetic prefactor used in
the active manuscript equations.

State labels use radial--orbital indices \((n_r,\ell)\) throughout; the lowest
channels used here are \((0,0),(0,1),(1,0),(0,2)\).
When both symbols appear, \(E_{0,\ell}\) denotes the yrast level in the
\(\ell\)-channel (\(n_r=0\)), while \(E_{\ell,0}\) denotes the \(\ell\)-th
radial excitation in the \(\ell=0\) channel used in \(f(\ell)\).
Moments of the ground-state density are
\begin{equation}
\langle r^k\rangle_{0,0}=\int_{0}^{\infty}\rho_{0,0}(r)\,r^{k+2}\,dr
=\int_{0}^{\infty}|R_{0,0}(r)|^2\,r^{k+2}\,dr.
\label{eq:moments-def}
\end{equation}
Appendix A gives derivations of the equations that follow, especially the
change of variables required to go from 3D to the radial (1D) reduction.

With this framework established, we calculate the moment sequence using the
generalized Bertlmann--Martin (GBM) inequalities. The primary challenge in
constructing the series in \eqref{eq:L_series} is that the ground-state wave
function, and thus the moments, are unknown. However, GBM inequalities allow us
to bypass the wave equation by establishing a recursive relationship between the
even moments \(\langle r^{2\ell}\rangle_{0,0}\) and the bound-state energy gaps.
The construction of the even-moment sequence relies on the sum rule for the
operator \(Q_{\ell,0}(\mathbf r)=r^\ell Y_{\ell 0}(\theta,\phi)\), where
\(Y_{\ell 0}\) denotes the usual spherical harmonic:
\begin{equation}
(E_{0,\ell}-E_{0,0})\sum_{n_r}\big|\langle 0,0|Q_{\ell,0}|n_r,\ell\rangle\big|^2
\;\le\;\sum_{n_r}(E_{n_r,\ell}-E_{0,0})\big|\langle 0,0|Q_{\ell,0}|n_r,\ell\rangle\big|^2,
\end{equation}
Here the squared matrix elements
\(\big|\langle 0,0|Q_{\ell,0}|n_r,\ell\rangle\big|^2\) are the corresponding
multipole transition strengths, and the factors \(E_{n_r,\ell}-E_{0,0}\) are
energy gaps from the ground state.
By evaluating the sums in this inequality, the Bertlmann--Martin sum-rule evaluation in the Hartree convention reduces to the recursive moment-ladder bound (for \(\ell\ge 1\)):
\begin{equation}
\boxed{\;
\langle r^{2\ell}\rangle_{0,0}\;\le\;
\frac{1}{2}\,\frac{\ell(2\ell+1)}{E_{0,\ell}-E_{0,0}}\;\langle r^{2\ell-2}\rangle_{0,0}\; }.
\label{eq:BM-ineq}
\end{equation}
\begin{equation}
E_{0,\ell}-E_{0,0}\;\le\;\frac{1}{2}\,\frac{\ell(2\ell+1)\,\langle r^{2\ell-2}\rangle_{0,0}}{\langle r^{2\ell}\rangle_{0,0}}.
\label{eq:BM-gap-upper}
\end{equation}
This energy-gap bound serves as a consistency constraint: the measured energy
levels used in the ladder must remain compatible with the recovered density
moments.

While the GBM relations established in \eqref{eq:BM-ineq} and
\eqref{eq:BM-gap-upper} provide rigorous upper bounds on the even-moment
sequence, the inverse reconstruction process requires precise numerical values
rather than inequality ranges. To bridge this gap, a saturating correction
factor
\begin{equation}
f(\ell)=1-\frac{\ell}{2(\ell+1)}
\left[\frac{E_{\ell,0}+E_{0,0}-2E_{0,\ell}}{E_{\ell,0}-E_{0,0}}\right]^2,
\label{eq:f-ell}
\end{equation}
is introduced. Multiplying the right-hand side of \eqref{eq:BM-ineq} by this factor yields a working moment estimate that is exact for the harmonic oscillator and Coulomb potentials and serves as a reliable ansatz for other potentials. This process successfully populates the even parity of the Laplace series expansion in \eqref{eq:L_series}. However, due to the structural limitations of the underlying sum rules, the odd-moment subsequence remains undetermined by the GBM ladder and must be supplied via auxiliary interpolation to complete the transform's representation.

By mapping measured energy gaps to the recursive bound in \eqref{eq:BM-ineq}
and applying the correction factor \(f(\ell)\), we obtain the coefficients
necessary to construct the even parity of the Laplace series in
\eqref{eq:L_series}. The odd parity for \eqref{eq:L_series} must be determined
through an auxiliary interpolation or completion scheme \cite{Mezhoud}.%

Because the truncated series \eqref{eq:L_series} is a local expansion about \(q=0\), it can be unsuitable for the \(q\)-values needed for stable inversion. To stabilize the transform, the truncated series is replaced with a Pad\'e rational approximant \(P(N,D)\). In practice, one constructs several low-order denominator-dominant pairs \((N,D)\) whose degree excess is compatible with the large-\(q\) decay of \(L(q)\), (i) discards those with obstructive singularities, and (ii) averages over the acceptable fractions to reduce dispersion at large \(q\).

Once a stable representation of \(L(q)\) is established through a Pad\'e \(P(N,D)\), the ground-state density \(\chi_{0,0}^2(r)\) is recovered through the Bromwich integral
\begin{equation}
\chi_{0,0}^2(r)=r^2\rho_{0,0}(r)=\frac{1}{2\pi i}\int_{c-i\infty}^{c+i\infty}
e^{qr}\,L(q)\,dq,\qquad c>\Re(\text{all singularities}) .
\label{eq:Bromwich}
\end{equation}
For simple rational forms, this inversion can be executed analytically using residue theory. In more complex cases, numerical Bromwich-inversion schemes such as de Hoog--Knight--Stokes Fourier-series acceleration and Abate--Whitt variants \cite{deHoogKnightStokes1982,AbateWhitt1995} can be used to invert the Laplace transform \(L(q)\) and recover the ground-state density. A comprehensive numerical assessment of these inversion schemes is presented in Section~\ref{sec:benchmark}, while additional benchmark results are detailed in Appendix~C.

Upon obtaining the ground-state density \(\chi_{0,0}^2(r)\) from \eqref{eq:Bromwich}, the reduced wavefunction \(\chi_{0,0}(r)\) is determined by taking the positive square root. The radial ground state is then given by
\begin{equation}
R_{0,0}(r)=\frac{\chi_{0,0}(r)}{r},\qquad r>0.
\end{equation}
At very small \(r\), the numerical inversion may amplify numerical noise and endpoint artifacts; in such cases, local interpolation techniques can be applied to ensure a smooth transition toward the origin.%

Finally, for the \(\ell=0\) ground state, the underlying potential \(V(r)\) is recovered by inverting the Schr\"odinger equation in Hartree a.u.:
\begingroup
\setlength{\abovedisplayskip}{8pt}
\setlength{\belowdisplayskip}{8pt}
\setlength{\abovedisplayshortskip}{6pt}
\setlength{\belowdisplayshortskip}{6pt}
\begin{equation}
V(r)=E_{0,0}+\frac{1}{2}\,\frac{\chi_{0,0}''(r)}{\chi_{0,0}(r)},\qquad r>0.
\label{eq:V-from-chi}
\end{equation}
\endgroup
To maintain numerical stability as \(r\to0\), the moments \(\langle r^{-1}\rangle\) and \(\langle r^{-2}\rangle\) derived from the Laplace transform \(L(q)\) are utilized to constrain the behavior of \(\rho(0)\) via a Stieltjes-based approach \cite{Stieltjes1894FractionsContinues}. This ensures that the reconstructed potential remains physically consistent near the origin.

\section{Benchmark Results} \label{sec:benchmark}
\subsection*{Radial (1D) Simulations} \label{subsec:1D-sims}
We evaluate the Laplace--GBM framework across five radial benchmarks to determine how accurately ground-state structure can be recovered from a limited spectral subset. To provide a consistent basis for evaluation, each benchmark sheet (Figs.~\ref{fig:coulomb}--\ref{fig:hyperbolic-dw}) follows a standardized four-panel diagnostic layout. The top row presents the physical reconstruction, comparing the recovered potential \(V(r)\) and the radial density \(r^2\rho_{0,0}(r)\) against exact or reference benchmark surfaces. The bottom row monitors the mathematical stability of the transform-based ladder by tracking the Pad\'e-continued Laplace transform \(L(q)\) and the resulting reduced radial wavefunction \(\chi_{0,0}(r)\). Throughout these tests, we compare our results against the variational least-squares baseline of R{\"o}hrl \cite{Roehrl2006RecoveringBC,Roehrl2011LeastSquares}. For cross-potential comparability, the R{\"o}hrl-style LSQ settings are calibrated on the Coulomb potential and then transferred unchanged to the remaining four potentials.

% Tighten spacing in the figure-heavy benchmark block without changing global layout.
\begingroup
\setlength{\intextsep}{2pt}
\setlength{\textfloatsep}{2pt}
\setlength{\floatsep}{2pt}
\setlength{\abovecaptionskip}{2pt}
\setlength{\belowcaptionskip}{0pt}
\begin{figure}[H]
 \centering
 \rmpbenchsheetgraphics{\benchfigCoulomb}
 \caption{Coulomb benchmark sheet.}
 \label{fig:coulomb}
\end{figure}

The Coulomb and harmonic oscillator (HO) cases serve as the analytical anchors for this study, as the GBM saturation factor is exact for these specific potentials. In the Coulomb benchmark, the pipeline achieves a high-resolution reconstruction with a relative \(L^2\) error of \(1.91\times10^{-3}\) while consuming only seven recurrence-consumed spectral inputs. This performance highlights the method's efficiency; it provides a reliable fit with approximately \(94.2\%\) fewer inputs than the 120-constraint R{\"o}hrl LSQ baseline. Similarly, the harmonic oscillator demonstrates the pipeline's effectiveness when the transform \(L(q)\) is well behaved, accurately reproducing the benchmark surfaces with a relative error of \(7.10\times10^{-3}\) under the non-Coulomb input protocol.

\begin{figure}[H]
 \centering
 \rmpbenchsheetgraphics{\benchfigHO}
 \caption{Harmonic oscillator benchmark sheet.}
 \label{fig:ho}
\end{figure}

Moving beyond these saturated cases, the Hulth\'en and Kratzer benchmarks test the method's ability to handle potentials with different asymptotic decays while maintaining Coulomb-like near-origin structure. These benchmarks utilize a Pad\'e-complete odd-moment closure to supply the missing spectral information required by the Laplace series. The Kratzer reconstruction is particularly stable, yielding an \(L^2\) error of \(4.94\times10^{-3}\) from 22 inputs. The Hulth\'en case further confirms this stability, capturing the qualitative structure of the density and potential with an error of \(8.95\times10^{-3}\).

\begin{figure}[H]
 \centering
 \rmpbenchsheetgraphics{\benchfigHulthen}
 \caption{Hulth\'en benchmark sheet.}
 \label{fig:hulthen}
\end{figure}

\begin{figure}[H]
 \centering
 \rmpbenchsheetgraphics{\benchfigKratzer}
 \caption{Kratzer benchmark sheet.}
 \label{fig:kratzer}
\end{figure}

\enlargethispage{\baselineskip}
The hyperbolic molecular well serves as the primary stress test for the pipeline, introducing anharmonic structures that are more sensitive to the truncated series representation. Unlike the previous cases, this benchmark utilizes maximum-entropy closure for odd-moment completion. While the qualitative structure of the potential is recovered, the relative \(L^2\) error increases to \(2.00\times10^{-1}\). This divergence illustrates a key failure mode where the true transform structure may mimic the obstructive poles that the Pad\'e filters are intended to remove.

\begin{figure}[H]
 \centering
 \rmpbenchsheetgraphics{\benchfigHyperbolic}
 \caption{Hyperbolic molecular well benchmark sheet.}
 \label{fig:hyperbolic-dw}
\end{figure}

\endgroup
\FloatBarrier

The performance metrics summarized in Table~\ref{tab:bench_summary}, alongside the pointwise absolute error overlays (Figs.~\ref{fig:vr-overlay-coulomb}--\ref{fig:vr-overlay-hyperbolic}), emphasize that the efficacy of any inverse reconstruction is inherently potential-dependent. While the R{\"o}hrl-style LSQ baseline provides a robust variational comparator using 60 DD + 60 DN eigenvalue constraints (120 total), the Laplace--GBM framework consistently recovers the qualitative features of the radial ground state using significantly less data. Specifically, the recurrence-consumed subsets required---7 inputs for Coulomb and 22 for non-Coulomb cases---represent substantial data reductions that facilitate reconstruction when spectral information is scarce. All results are derived from archived benchmark runs with recorded settings and checksums, ensuring the exact reproducibility of the surfaces and closure families reported herein.

\begin{table}[H]
\centering
\caption{Relative $L^{2}$ values on five radial (1D) benchmarks for Laplace (GBM) and LSQ, with recurrence-consumed GBM input counts.}
\label{tab:bench_summary}
\small
% RMP-only benchmark summary table.
\begin{tabular}{@{}l c r r@{}}
  \toprule
  Potential & {\shortstack[c]{GBM\\inputs}} & {\shortstack[c]{LSQ\\(rel.\ $L^2$)}} & {\shortstack[c]{Laplace (GBM)\\(rel.\ $L^2$)}} \\
  \midrule
  Coulomb & 7 & 1.40e-1 & 1.91e-3 \\
  HO & 22 & 3.29e-2 & 7.10e-3 \\
  Hulth\'en & 22 & 1.68e-1 & 8.95e-3 \\
  Kratzer & 22 & 3.75e-1 & 4.94e-3 \\
  Hyperbolic molecular well & 22 & 5.40e-1 & 2.00e-1 \\
  \bottomrule
\end{tabular}
\par\smallskip
\footnotesize\emph{Note.} LSQ entries use 120 total eigenvalue constraints (60 Dirichlet--Dirichlet and 60 Dirichlet--Neumann). GBM inputs denote recurrence-consumed subsets under the active benchmark protocol (Coulomb 7; non-Coulomb 22). Values are rounded to match the active benchmark figure-legend precision.

\end{table}

\begingroup
\setlength{\intextsep}{2pt}
\setlength{\textfloatsep}{2pt}
\setlength{\floatsep}{2pt}
\setlength{\abovecaptionskip}{2pt}
\setlength{\belowcaptionskip}{0pt}

\begin{figure}[H]
 \centering
 \rmpvroverlaygraphics{\benchvrfigCoulomb}
 \caption{Coulomb \(V(r)\) overlay and pointwise absolute error for Exact, Laplace (GBM), and LSQ.}
 \label{fig:vr-overlay-coulomb}
\end{figure}

\begin{figure}[!htbp]
 \centering
 \rmpvroverlaygraphics{\benchvrfigHO}
 \caption{Harmonic oscillator \(V(r)\) overlay and pointwise absolute error for Exact, Laplace (GBM), and LSQ.}
 \label{fig:vr-overlay-ho}
\end{figure}

\begin{figure}[!htbp]
 \centering
 \rmpvroverlaygraphics{\benchvrfigHulthen}
 \caption{Hulth\'en \(V(r)\) overlay and pointwise absolute error for Exact, Laplace (GBM), and LSQ.}
 \label{fig:vr-overlay-hulthen}
\end{figure}

\begin{figure}[!htbp]
 \centering
 \rmpvroverlaygraphics{\benchvrfigKratzer}
 \caption{Kratzer \(V(r)\) overlay and pointwise absolute error for Exact, Laplace (GBM), and LSQ.}
 \label{fig:vr-overlay-kratzer}
\end{figure}

\begin{figure}[!htbp]
 \centering
 \rmpvroverlaygraphics{\benchvrfigHyperbolic}
 \caption{Hyperbolic molecular well \(V(r)\) overlay and pointwise absolute error for Exact, Laplace (GBM), and LSQ.}
 \label{fig:vr-overlay-hyperbolic}
\end{figure}

\endgroup
\FloatBarrier

% ---------- §4 Discussion & Practical Considerations ----------
\section{Discussion and practical considerations}
\label{sec:discussion}

The radial (1D) ground-state benchmarks demonstrate that the Laplace--GBM framework successfully bridges the gap between discrete spectral data and continuous potential reconstruction in the benchmark setting. By transforming limited energy gaps into ground-state density moments, the method provides a robust alternative to full spectral fitting, provided that the numerical stability of the analytic continuation is strictly managed. To transition from the theoretical benchmarks to a functional implementation, the following discussion details the specific protocols for data handling and stability control.

The foundational input-scaling stage involves establishing a consistent physical basis and addressing the inherent noise in empirical measurements. The reconstruction assumes a consistent Hartree atomic-units convention, utilizing a kinetic term of \(-\tfrac{1}{2}\nabla^2\). The primary inputs consist of low-lying energy gaps \(\{\Delta_\ell\}_{\ell\ge1}\) measured against the \(\ell=0\) ground state. These gaps form a truncated Laplace series for \(L(q)\), built from an even-moment ladder supplemented by an auxiliary interpolation to supply the missing odd-order moments of \(r^{2}\rho_{0,0}(r)\). When working with empirical data, pre-whitening through centering and rescaling is necessary to prevent single entries from dominating the Pad\'e selection. While full uncertainty quantification is outside the current scope, the dispersion of the recovered density \(\rho_{0,0}\) can be estimated by bootstrapping the input gaps through the ladder and Pad\'e stages.

Once the inputs are scaled, the framework must convert these discrete spectral gaps into a stable, continuous transform through analytic continuation. The GBM ladder inequality provides essential upper bounds for even moments, which stabilizes the resulting series for \(L(q)\). In practice, a shallow ladder of four to six moments is often sufficient, provided the input gaps are accurate. To ensure stability, researchers should monitor two diagnostics: successive GBM steps should not amplify numerical noise into alternating-sign estimates, and dropping the final ladder step should not drastically alter the selected Pad\'e family.

Because the truncated Taylor series is only reliable in a narrow neighborhood of \(q=0\), Pad\'e rational approximants \(P(N,D)\) are used to continue \(L(q)\) into the wider range required for stable inversion. Empirically, denominator-dominant Pad\'e families outperform universal diagonal rules for these benchmarks. To optimize results, we employ an obstructive-pole filter to discard candidates with singularities in the closed right half-plane, followed by a model-averaging step to mitigate noise and spurious pole-zero cancellations.

With a stable representation of \(L(q)\) established, the final stage of the pipeline requires numerical inversion and potential mapping. When the selected Pad\'e approximant is rational and well conditioned, the analytic inverse via residues is preferred, as it yields a fast and interpretable exponential sum. If the approximant is not simple, numerical Bromwich inversion---specifically schemes with proven convergence acceleration like de Hoog--Knight--Stokes Fourier-series acceleration or Abate--Whitt variants \cite{deHoogKnightStokes1982,AbateWhitt1995}---serves as a robust fallback.

Mapping the recovered reduced wavefunction \(\chi_{0,0}(r)\) back to the potential \(V(r)\) requires a monotone \(r\)-grid fine enough to resolve the shortest exponential scales. In Hartree units, the \(\ell=0\) recovery formula is \(V(r)=E_{0,0}+\tfrac{1}{2}\chi_{0,0}''(r)/\chi_{0,0}(r)\), with \(\chi_{0,0}(r)=r\sqrt{\rho_{0,0}(r)}\). Derivatives should be computed using a smoothing differentiator, such as a low-order Savitzky--Golay stencil, to suppress high-frequency noise without biasing the results near the origin. Finally, regularity at \(r=0\) must be enforced by ensuring \(\chi_{0,0}(0)=0\) and \(\chi_{0,0}(r)=\mathcal{O}(r)\) through either local polynomial fitting or extrapolation.

The following performance defaults and potential pitfalls provide a template for applying this method to future radial systems. Across the five benchmarks, low-order denominator-dominant fractions, typically with total order \(K\!\in\![6,10]\), together with strict pole filtering, provided the best bias--variance tradeoff. However, the method may encounter difficulties if the truncated series is too shallow to identify stable Pad\'e families, if gap estimates are severely flawed, or if the true transform contains branch cuts that mimic obstructive poles. In these instances, restoring stability often requires tightening the pole filter, shrinking the admissible index set, or acquiring additional moments.

To facilitate exact reproduction and fair comparison with least-squares baselines, reports should clearly document unit conventions, gap sets, ladder depth, the odd-moment completion family, and the specific Pad\'e filter rules, inversion route, numerical parameters, and differentiators employed.

% ---------- §5 Conclusion & Outlook ----------

\section{Conclusion and outlook}
\label{sec:conclusion}

This study presented a review-style synthesis and benchmark evaluation for reconstructing radial (1D) ground-state structures from restricted spectral data. By integrating a GBM-based even-moment ladder with Pad\'e-continued Laplace transforms, we established a robust benchmark pipeline for recovering both radial densities and their underlying potentials. The comparative analysis against the R{\"o}hrl-style least-squares baseline demonstrates that qualitatively accurate reconstructions are achievable from a compact subset of spectral inputs.

The reported rankings across five canonical potentials---Coulomb, harmonic oscillator, Hulth\'en, Kratzer, and the hyperbolic molecular well---remain empirical and specific to the stated benchmark configurations. Our results indicate that while the Laplace--GBM framework offers significant data efficiency, its performance is inherently potential dependent. Nevertheless, this manuscript provides a consistent mathematical frame for benchmark comparison, establishing standardized notation, unit conventions, and comparison surfaces for radial inverse problems.

Looking forward, several avenues exist to expand this framework. Future work will focus on incorporating explicit Fourier-side comparisons and broader cross-method evaluations. Additionally, further harmonization of notation and unit conventions across diverse potential classes will continue to refine the utility of moment-based inversion schemes in quantum spectral theory.
\appendix
% --- APPENDIX_NUMBERING_BEGIN ---
% Keep section refs as "Appendix A" while subordinate counters use local A.n.
\makeatletter
\@addtoreset{figure}{section}
\@addtoreset{table}{section}
\renewcommand{\theequation}{\Alph{section}.\arabic{equation}}
\renewcommand{\thefigure}{\Alph{section}.\arabic{figure}}
\renewcommand{\thetable}{\Alph{section}.\arabic{table}}
\renewcommand{\thetheorem}{\arabic{theorem}}
\renewcommand{\theassumption}{\Alph{section}.\arabic{assumption}}
\renewcommand{\thecorollary}{\Alph{section}.\arabic{corollary}}
\renewcommand{\thelemma}{\Alph{section}.\arabic{lemma}}
\renewcommand{\theproposition}{\Alph{section}.\arabic{proposition}}
\renewcommand{\thedefinition}{\Alph{section}.\arabic{definition}}
\renewcommand{\theexample}{\Alph{section}.\arabic{example}}
\renewcommand{\theremark}{\arabic{remark}}
\providecommand*{\theHfigure}{}
\providecommand*{\theHtable}{}
\providecommand*{\theHtheorem}{}
\providecommand*{\theHassumption}{}
\providecommand*{\theHcorollary}{}
\providecommand*{\theHlemma}{}
\providecommand*{\theHproposition}{}
\providecommand*{\theHdefinition}{}
\providecommand*{\theHexample}{}
\renewcommand*{\theHequation}{appendix.\Alph{section}.\arabic{equation}}
\renewcommand*{\theHfigure}{appendix.\Alph{section}.\arabic{figure}}
\renewcommand*{\theHtable}{appendix.\Alph{section}.\arabic{table}}
\renewcommand*{\theHtheorem}{appendix.\Alph{section}.\arabic{theorem}}
\renewcommand*{\theHassumption}{appendix.\Alph{section}.\arabic{assumption}}
\renewcommand*{\theHcorollary}{appendix.\Alph{section}.\arabic{corollary}}
\renewcommand*{\theHlemma}{appendix.\Alph{section}.\arabic{lemma}}
\renewcommand*{\theHproposition}{appendix.\Alph{section}.\arabic{proposition}}
\renewcommand*{\theHdefinition}{appendix.\Alph{section}.\arabic{definition}}
\renewcommand*{\theHexample}{appendix.\Alph{section}.\arabic{example}}
\renewcommand*{\theHremark}{appendix.\Alph{section}.\arabic{remark}}
\makeatother
% --- APPENDIX_NUMBERING_END ---

\refstepcounter{section}
\label{app:A}
\addcontentsline{toc}{section}{Appendix \Alph{section}}
\subsection*{Conditions on Some Potentials}
\phantomsection
\label{appA:conds}
\label{appA:conds-alt1}
Appendix A follows the foundational work of Bertlmann and Martin
\cite{Bertlmann}, which established key inequalities for heavy quark-antiquark
systems. While that work utilized these bounds to estimate quark masses, we
adapt and extend this framework here to provide the necessary constraints for
our inverse potential reconstruction. Specifically, we clarify the conditions
under which certain radial moment orderings hold---addressing areas noted as
unproven in the original treatment---and translate these theorems into the
radial--orbital notation \((n_r,\ell)\) used throughout this manuscript. While
certain results remain valid for completely arbitrary potentials, most of the
following analysis restricts the class of potentials \(V(r)\) to those
satisfying Conditions~(i)--(iii). These conditions are saturated by the
harmonic oscillator and Coulomb potentials.

\begin{equation}
\left(\frac{d}{d r}\right)^3 r^2 V \geq 0, \quad 
\lim _{r \rightarrow 0} r \left[V + \frac{1}{2} r \frac{d V}{d r} \right] \leq 0.
\tag{i}\label{appA-cond-A}
\end{equation}

\begin{equation}
\frac{d}{d r} \frac{1}{r} \frac{d}{d r}\left[V(r)+\frac{1}{2} r \frac{d V}{d r}\right]<0.
\tag{ii}\label{appA-cond-B}
\end{equation}

\begin{equation}
V^{\prime \prime}<0, \quad \text { i.e., } V \text { is concave.}
\tag{iii}\label{appA-cond-C}
\end{equation}

\begin{theorem}
\[
\left\langle r^2\right\rangle_{0,0} < \left\langle r^2\right\rangle_{0,1} < \left\langle r^2\right\rangle_{1,0}.
\]

The left-hand inequality holds for any potential, while the right-hand inequality holds if Conditions~(i) and~(ii) are satisfied.
\begin{remark}
The ordering \(\left\langle r^2\right\rangle_{0,1} < \left\langle r^2\right\rangle_{1,0}\) is not universal. A known counterexample is the infinite square well, where the \((0,1)\)-channel repulsion from the origin dominates, preventing the hierarchy from holding even if the energy ordering \(E_{0,1} < E_{1,0}\) remains intact.
\end{remark}
\begin{remark}
While Bertlmann and Martin expressed the belief that \(\left\langle r^2\right\rangle_{0,0} < \left\langle r^2\right\rangle_{1,0}\) should hold for any monotonically increasing potential (including the square well), a general proof was not obtained in their original work. This appendix identifies the specific conditions for which this ordering is verified, which include all attractive power potentials up to \(r^3\).
\end{remark}
\end{theorem}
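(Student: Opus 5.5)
The left-hand inequality will be proved for an arbitrary potential by a Wronskian comparison of the two nodeless reduced radial functions; the right-hand inequality will be reduced, via hypervirial identities, to a sign condition that Conditions~(i) and~(ii) supply. Write \(u_0=\chi_{0,0}\) and \(u_1=\chi_{0,1}\). These satisfy
\[
-\tfrac12 u_0''+Vu_0=E_{0,0}u_0,\qquad -\tfrac12 u_1''+\bigl(V+r^{-2}\bigr)u_1=E_{0,1}u_1 .
\]
Both are yrast states, so both are nodeless and may be taken positive on \((0,\infty)\).

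\textbf{Left inequality.} Set \(W=u_0u_1'-u_1u_0'\). Subtracting the two equations gives
\[
W'=2u_0u_1\bigl(r^{-2}-\Delta\bigr),\qquad \Delta=E_{0,1}-E_{0,0}.
\]
If \(\Delta\le 0\) then \(W'>0\) everywhere. If \(\Delta>0\), the bracket is positive for \(r<\Delta^{-1/2}\) and negative afterwards, so it changes sign exactly once. The boundary values are \(W(0)=0\) (because \(u_0\sim r\) and \(u_1\sim r^2\)) and \(W(\infty)=0\) (bound-state decay). Hence \(W\) rises from \(0\) and then falls back to \(0\), so \(W>0\) on \((0,\infty)\). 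Equivalently, \(g=u_1/u_0\) is strictly increasing.

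Now \(u_1^2=g^2u_0^2\) with \(g^2\) strictly increasing, and \(r^2\) is strictly increasing. The continuous Chebyshev (association) inequality for the probability measure \(u_0^2\,dr\) then gives
\[
\langle r^2\rangle_{0,1}=\frac{\langle g^2r^2\rangle_{0,0}}{\langle g^2\rangle_{0,0}}>\langle r^2\rangle_{0,0}.
\]
No property of \(V\) beyond the existence of these bound states is used.

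\textbf{Right inequality.} Here \(u_{10}=\chi_{1,0}\) has a single node \(r_*\), so the monotone-ratio argument cannot be applied directly. I would first use the radial hypervirial identities, with \(f=r\) and \(f=r^3\), for each of the three states. In Hartree form these read schematically
\[
2\langle f'(E-U)\rangle-\langle fU'\rangle+\tfrac14\langle f'''\rangle=0,
\]
where \(U\) is the effective potential. They express \(\langle r^2\rangle\) in terms of \(E\) and expectation values of
\[
\tfrac{1}{r}(r^2V)'=2\Bigl[V+\tfrac12 rV'\Bigr]
\]
and of \((r^2V)'''\). Note that exactly these combinations appear in Conditions~(i) and~(ii). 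The boundary term at \(r\to0\) is controlled by the second half of~(i), and the sign of the \((r^2V)'''\) contribution by the first half of~(i).

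This should reduce \(\langle r^2\rangle_{1,0}-\langle r^2\rangle_{0,1}\) to a difference of expectations of the function
\[
h(r)=\frac{d}{dr}\Bigl[V+\tfrac12 rV'\Bigr]\Big/ r
\]
between the two states. I would then show that this difference has a definite sign by a Wronskian/Sturm comparison between \(u_{01}\) and \(u_{10}\), run separately on \((0,r_*)\) and \((r_*,\infty)\). Condition~(ii) says \(h'<0\), so \(h\) is strictly decreasing, and it should combine with the single-crossing structure of the Wronskian exactly as in the left inequality.

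I expect this last step to be the main obstacle. On each side of the node one must check that the Wronskian changes sign at most once, and that the weight functions are ordered strongly enough for a Chebyshev-type inequality to apply. As a sanity check, I would verify that equality is attained in the limiting harmonic-oscillator and Coulomb cases, where Conditions~(i)--(ii) are saturated, and that for \(V=r^{k}\) with \(k\le 3\) the hypotheses hold. The latter is consistent with the remark that the ordering covers all attractive power potentials up to \(r^3\).
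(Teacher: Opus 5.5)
Your proof of the left inequality is correct and is essentially the paper's argument. The Wronskian makes $u_{0,1}/u_{0,0}$ increasing, and a single-crossing (Chebyshev) comparison with $r^2$ finishes it. Your derivation of $W>0$ from $W(0)=W(\infty)=0$ and the single sign change of $r^{-2}-\Delta$ is in fact tighter than the paper's appeal to Sturm non-crossing.

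The right inequality, however, is not proved, and the route you sketch does not close. The hypervirial identities do not give what you claim. With $f=r$ you get only the virial relation $E=\langle F\rangle$, where $F=V+\tfrac12 rV'$, and $\langle r^2\rangle$ does not appear at all. With $f=r^3$ you get $E\langle r^2\rangle=\langle r^2V+\tfrac16 r^3V'\rangle+\tfrac13\ell(\ell+1)-\tfrac14$, which only trades $\langle r^2\rangle$ for another unknown expectation. The quantity $(r^2V)'''$ never appears, and no reduction of $\langle r^2\rangle_{1,0}-\langle r^2\rangle_{0,1}$ to expectations of $h=F'/r$ follows. More fundamentally, a Chebyshev-type step cannot work here. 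The difference $u_{1,0}^2-u_{0,1}^2$ changes sign twice: positive near $0$, negative around the node of $u_{1,0}$, positive in the tail. Normalization alone therefore cannot fix the sign of $\int r^2(u_{1,0}^2-u_{0,1}^2)\,dr$, and you never bring in any spectral information to supply a second constraint.

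The missing idea is the paper's. By the virial theorem (the centrifugal pieces cancel), $E_{n_r,\ell}=\int F\,u_{n_r,\ell}^2\,dr$. So the energy ordering $E_{1,0}>E_{0,1}$ becomes $\int F\,(u_{1,0}^2-u_{0,1}^2)\,dr>0$.

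Condition~(ii) is then used not as a monotone weight but as concavity of $F$ in $s=r^2$, because $\tfrac1r\tfrac{d}{dr}=2\tfrac{d}{ds}$. Choose $A,\mu$ so that $F-A-\mu r^2$ vanishes at the two crossings $r_1<r_2$ of $u_{1,0}^2-u_{0,1}^2$. This function is nonnegative on $[r_1,r_2]$ and nonpositive outside, i.e.\ opposite in sign to $u_{1,0}^2-u_{0,1}^2$. Hence $\int(u_{1,0}^2-u_{0,1}^2)(F-A-\mu r^2)\,dr<0$, and with normalization, $\mu\int r^2(u_{1,0}^2-u_{0,1}^2)\,dr>\int F(u_{1,0}^2-u_{0,1}^2)\,dr>0$.

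This needs $\mu\ge0$ (the strict inequality then forces $\mu>0$). That holds because $F$ is nondecreasing: Condition~(i) makes $2rF=(r^2V)'$ convex with nonpositive limit at the origin. Your Wronskian comparison of $u_{0,1}$ and $u_{1,0}$ on either side of the node is the right tool, but for establishing the two-crossing sign pattern, which the paper imports from Bertlmann--Martin, not for the final inequality.

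Finally, your sanity checks would not come out as you expect. The conclusion is strict for both anchors: $\tfrac52<\tfrac72$ for the oscillator with $\omega=1$, and $30<42$ for hydrogen. What the anchors saturate are the hypotheses, not the conclusion. For the oscillator $F=r^2$ is linear in $s$, so (ii) is an equality and the argument collapses to $\int r^2(u_{1,0}^2-u_{0,1}^2)\,dr=E_{1,0}-E_{0,1}>0$. Also, for $V=r^k$, Condition~(ii) as written fails when $2<k\le3$.
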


\begin{proof}
\proofpart{Left-hand side}

With the reduced radial function \(u_{n_r,\ell}(r)=rR_{n_r,\ell}(r)\), the
mean-square radii reduce to one-dimensional radial integrals:
\begin{equation}
\langle r^2 \rangle_{0,0}=\int_{0}^{\infty} u_{0,0}^{2}r^2 dr,
\tag{A.1}\label{appA-eq-A-1}
\end{equation}

\begin{equation}
\langle r^2 \rangle_{0,1}=\int_{0}^{\infty} u_{0,1}^{2}r^2 dr.
\tag{A.2}\label{appA-eq-A-2}
\end{equation}

The corresponding Hartree-a.u.\ radial equations for the two lowest channels
are
\begin{equation}
\xRightarrow{\ell=0} -\frac{1}{2}u^{''}_{0,0} + Vu_{0,0} = E_{0,0}u_{0,0}.
\tag{A.3}\label{appA-eq-A-3}
\end{equation}

\begin{equation}
\xRightarrow{\ell=1} -\frac{1}{2}u^{''}_{0,1} + \frac{1}{r^2}u_{0,1} + Vu_{0,1} = E_{0,1}u_{0,1}.
\tag{A.4}\label{appA-eq-A-4}
\end{equation}

Let \(f(r)=u_{0,1}(r)/u_{0,0}(r)\). Since \(u_{0,0}>0\) for the ground state,
it is enough to control the Wronskian numerator
\[
W(r)=u_{0,0}u'_{0,1}-u_{0,1}u'_{0,0}.
\]
\begin{equation}
\frac{d}{dr}\left[\frac{u_{0,1}}{u_{0,0}}\right]
=\frac{W(r)}{u_{0,0}^{2}}>0 \quad (r>0),
\tag{A.5}\label{appA-eq-A-5}
\end{equation}

\begin{equation}
W'(r)=u_{0,0}u_{0,1}\left(\frac{2}{r^2}+2E_{0,0}-2E_{0,1}\right).
\tag{A.6}\label{appA-eq-A-6}
\end{equation}
Here \(W(0)=0\), and the centrifugal term makes \(W'(r)>0\) near the origin.
The Sturm comparison/non-crossing argument for the lowest radial states in the
\(\ell=0\) and \(\ell=1\) channels prevents \(W\) from crossing back through
zero; this is not an assertion that the \((0,1)\) state has an interior radial
node. Hence \(f\) is strictly increasing on \((0,\infty)\).

Because the two states are normalized and \(f(0)=0\), the strict monotonicity
of \(f\) implies that \(u_{0,1}^{2}-u_{0,0}^{2}\) has one zero, at
\(r=r_0\). The two factors in the next integral therefore change sign at the
same point:

\begin{equation}
\left<r^2\right>_{0,1} - \left<r^2\right>_{0,0} = \int^{\infty}_{0}(u^{2}_{0,1} - u^{2}_{0,0})(r^2 - r_0^2)dr > 0.
\tag{A.7}\label{appA-eq-A-7}
\end{equation}
This proves \(\left<r^2\right>_{0,0}<\left<r^2\right>_{0,1}\).

\proofpart{Right-hand side}
It remains to prove that
\begin{equation}
\left<r^2\right>_{1,0} - \left<r^2\right>_{0,1} = \int^{\infty}_{0}r^2(u^{2}_{1,0} - u^{2}_{0,1})dr.
\tag{A.8}\label{appA-eq-A-8}
\end{equation}

Let
\[
F(r)=V(r)+\frac{1}{2}rV'(r).
\]
By normalization, \(\int_{0}^{\infty}(u_{1,0}^{2}-u_{0,1}^{2})\,dr=0\). The
central-potential virial theorem gives \(2\langle T\rangle=\langle rV'\rangle\),
hence

\begin{equation*}
E_{1,0}=\int_{0}^{\infty}F(r)u_{1,0}^{2}\,dr,
  \quad
E_{0,1}=\int_{0}^{\infty}F(r)u_{0,1}^{2}\,dr.
\end{equation*}

Equivalently, if one applies the virial theorem to the radial effective form
\(V+\ell(\ell+1)/(2r^2)\), the virial derivative of the centrifugal term cancels
its static contribution, leaving the same central-potential expression above.
The energy-ordering hypothesis gives \(E_{1,0}>E_{0,1}\), so subtracting the
two virial identities shows

\begin{equation}
\int_{0}^{\infty}(u_{1,0}^2 - u_{0,1}^2)\Bigl[V + \frac{1}{2}r\frac{dV}{dr}\Bigr]dr>0.
\tag{A.9}\label{appA-eq-A-9}
\end{equation}

Bertlmann--Martin's comparison argument gives the corresponding two-crossing
sign pattern for \(u_{1,0}^{2}-u_{0,1}^{2}\). Under Condition~(ii), \(F(r)\)
is concave as a function of \(r^{2}\), so constants \(A\) and \(\mu>0\) may be
chosen such that \(F(r)-A-\mu r^{2}\) vanishes at the same two crossings. The
sign pattern then gives
\[
\int_{0}^{\infty}(u_{1,0}^{2}-u_{0,1}^{2})(F-A-\mu r^{2})\,dr<0.
\]
Using normalization and \eqref{appA-eq-A-9}, this inequality implies
\(\int_{0}^{\infty}r^{2}(u_{1,0}^{2}-u_{0,1}^{2})\,dr>0\), proving
\(\left<r^2\right>_{0,1}<\left<r^2\right>_{1,0}\). Together with the left-hand
inequality, the claimed hierarchy follows.

\end{proof}
\subsection*{Upper Bound for 2nd Moment of Position of Ground State}
\phantomsection
\label{sec:A.2}
\label{appA:gs-bound}
While Theorem~1 establishes the relative ordering of states, Theorem~2
turns the same framework into an upper bound for the ground-state second
moment. The bound uses the energy gap between the \(\ell=0\) and \(\ell=1\)
states to constrain the physical size of the ground-state density, which is
useful for keeping the inversion framework consistent when spectral input is
limited.

\begin{theorem}
The second moment of position for the ground state \((0,0)\) is bounded by the
energy difference between the \(\ell=0\) and \(\ell=1\) states:
\[
\left\langle r^2\right\rangle_{0,0}
\leq \frac{3}{2\left(E_{0,1}-E_{0,0}\right)}.
\]
\end{theorem}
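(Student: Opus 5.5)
The plan is to use the dipole sum rule, which is the $\ell=1$ case of the ladder \eqref{eq:BM-ineq}. We take the operator $z=r\cos\theta$, which is proportional to $Q_{1,0}$, and sum over a complete set of intermediate states $|k\rangle$. With Hartree kinetic term $-\tfrac12\nabla^2$, the double commutator is $[z,[H,z]]=1$. This gives the Thomas--Reiche--Kuhn sum rule
\[
\sum_k (E_k-E_{0,0})\,|\langle 0,0|z|k\rangle|^2=\tfrac12 .
\]
Because the ground state has $\ell=0$, the selection rules for $z$ allow only $\ell=1$ intermediate states $|n_r,1\rangle$ (with $m=0$). Each of these satisfies $E_{n_r,1}\ge E_{0,1}$, since $E_{0,1}$ is the lowest level in the $\ell=1$ channel.

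Next I bound each gap below by $E_{0,1}-E_{0,0}>0$. This yields
\[
(E_{0,1}-E_{0,0})\sum_{n_r}|\langle 0,0|z|n_r,1\rangle|^2\le \tfrac12 .
\]
By completeness, the sum equals $\langle 0,0|z^2|0,0\rangle$. Spherical symmetry of the ground state gives $\langle z^2\rangle_{0,0}=\tfrac13\langle r^2\rangle_{0,0}$. Substituting produces
\[
\langle r^2\rangle_{0,0}\le \frac{3}{2(E_{0,1}-E_{0,0})},
\]
which is exactly the claimed bound. It coincides with \eqref{eq:BM-ineq} at $\ell=1$, because $\tfrac12\cdot 1\cdot 3=\tfrac32$ and $\langle r^0\rangle_{0,0}=1$.

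The commutator step is where I expect the main care to be needed. One computes $[H,z]=-\partial_z$, and then $[z,-\partial_z]=1$. Any local potential $V$ commutes with $z$, so no condition on $V$ is required. The identity $\langle 0|[z,[H,z]]|0\rangle=2\sum_k(E_k-E_0)|\langle 0|z|k\rangle|^2$ follows by inserting the identity operator. This needs $z\Psi_{0,0}$ in the form domain, i.e.\ $\langle r^2\rangle_{0,0}<\infty$ together with finiteness of the kinetic term. I would state this as a standing assumption; it holds for all the bound-state potentials considered.

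If the spectrum has a continuous part, the "sum over $n_r$" should be read as a spectral integral. The lower bound $E\ge E_{0,1}$ still holds on the $\ell=1$ subspace, provided $E_{0,1}$ is the bottom of that channel's spectrum, so the argument goes through unchanged. Equality holds exactly when $z\Psi_{0,0}$ lies entirely in the $(0,1)$ eigenspace, as for the harmonic oscillator.
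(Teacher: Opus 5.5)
Your proposal is correct and follows essentially the same route as the paper. Both arguments use the double-commutator (TRK) sum rule $\sum_k(E_k-E_{0,0})|\langle k|\hat z|0,0\rangle|^2=\tfrac12$, the dipole selection rule that forces every intermediate gap to be at least $E_{0,1}-E_{0,0}$, and closure to convert the remaining sum into a second moment. The only cosmetic difference is that you work with the single component $z$ and use $\langle z^2\rangle_{0,0}=\tfrac13\langle r^2\rangle_{0,0}$, whereas the paper sums over $x,y,z$ to get $\tfrac32$ directly; your explicit lower bound on each gap is also a cleaner statement of the paper's ``keep only the lowest transition'' step.
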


\begin{proof}
The proof uses completeness together with the standard double-commutator
identity. Insert \(\mathds{1}=\sum_{k}|k\rangle\langle k|\) into the
expectation value of \([\hat{H},\hat{x}]\hat{x}\) to obtain
\begin{equation}
\begin{aligned}
\langle 0,0|[\hat{H},\hat{x}]\,\hat{x}|0,0\rangle
&=\sum_{k}\langle 0,0|[\hat{H},\hat{x}]|k\rangle\langle k|\hat{x}|0,0\rangle\\
&=\sum_{k}(E_{0,0}-E_{k})|\langle k|\hat{x}|0,0\rangle|^{2}.
\end{aligned}
\tag{A.10}\label{appA-eq-A-10}
\end{equation}
With $\hat{H}=\hat{p}^{2}/2+V$ and $\hat{p}_{x}=-i\partial_{x}$,
\begin{equation*}
[\hat{H},\hat{x}]=\frac{1}{2}[\hat{p}^{2},\hat{x}]
=\frac{1}{2}\left(\hat{p}_{x}[\hat{p}_{x},\hat{x}]+[\hat{p}_{x},\hat{x}]\,\hat{p}_{x}\right)
=-i\hat{p}_{x},
\end{equation*}
so
\begin{equation*}
[\hat{x},[\hat{H},\hat{x}]]=1.
\end{equation*}
Using $\langle 0,0|[\hat{x},[\hat{H},\hat{x}]]|0,0\rangle
 = 2\sum_{k}(E_{k}-E_{0,0})|\langle k|\hat{x}|0,0\rangle|^{2}$ yields
\begin{equation}
\sum_{k}(E_{k}-E_{0,0})|\langle k|\hat{x}|0,0\rangle|^{2}
=\frac{1}{2}.
\tag{A.11}\label{appA-eq-A-11}
\end{equation}
This is the one-coordinate sum rule. Summing over \(x,y,z\) gives
\[
\sum_{k}(E_{k}-E_{0,0})|\langle k|\mathbf{r}|0,0\rangle|^{2}=\frac{3}{2}.
\]
To establish the bound, keep only the lowest transition to an \(\ell=1\) state
with energy \(E_{0,1}\), thereby underestimating the positive sum. Using
\(\sum_{k}|\langle k|\mathbf{r}|0,0\rangle|^{2}=\langle r^{2}\rangle_{0,0}\)
then gives
\begin{equation}
\langle r^{2}\rangle_{0,0} \leq \frac{3}{2\left(E_{0,1}-E_{0,0}\right)}.
\tag{A.12}\label{appA-eq-A-12}
\end{equation}
This is the stated ground-state bound.

\end{proof}

% ---------- Appendix A third subheading ----------
\subsection*{Bounds for 2nd Moment of Position of the \((0,1)\) State}
\phantomsection
\label{appA:l1-bound}

Building upon the ground-state spatial constraint established in Theorem~2, we
use completeness and transition sum rules to derive analogous bounds for the
first angularly excited \((0,1)\) state.

\begin{theorem}
For any potential where Conditions~(i) and~(ii) are sufficient, and under the
energy ordering \(E_{0,1}<E_{1,0}<E_{0,2}\), the second moment
\(\left\langle r^2\right\rangle_{0,1}\) is bounded by
\[
\frac{2}{E_{0,1}-E_{0,0}} < \left\langle r^2\right\rangle_{0,1}
\leq \frac{3+\left(E_{1,0}-E_{0,0}\right)/\left(E_{0,1}-E_{0,0}\right)}{2\left(E_{1,0}-E_{0,1}\right)} .
\]
\end{theorem}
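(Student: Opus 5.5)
The plan is to work with radial one-dimensional matrix elements and to use channel-resolved versions of the dipole sum rule from the proof of Theorem~2. Write $I_n=\int_0^\infty u_{n,0}\,r\,u_{0,1}\,dr$ and $J_n=\int_0^\infty u_{n,2}\,r\,u_{0,1}\,dr$. Acting with $\mathbf r$ on a fixed $m$-component of the $(0,1)$ state, the angular algebra splits the total dipole strength with weight $1/3$ into the $\ell=0$ channel and weight $2/3$ into the $\ell=2$ channel. Completeness then gives
\[
\langle r^2\rangle_{0,1}=\tfrac13\sum_n I_n^2+\tfrac23\sum_n J_n^2 .
\]
For the $(0,0)$ state, Theorem~2's sum rule gives $(E_{0,1}-E_{0,0})\,I_0^2\le \sum_k (E_k-E_{0,0})|\langle k|\mathbf r|0,0\rangle|^2=3/2$. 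Here the full dipole strength from $(0,0)$ lies in the $\ell=1$ channel, so no angular factor appears. Hence
\[
I_0^2\le \frac{3}{2(E_{0,1}-E_{0,0})} .
\]

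\emph{Upper bound.} I will apply the Thomas--Reiche--Kuhn sum rule (A.11), summed over $x,y,z$, with the $(0,1)$ state as the reference state:
\[
\tfrac13\sum_n (E_{n,0}-E_{0,1})I_n^2+\tfrac23\sum_n (E_{n,2}-E_{0,1})J_n^2=\tfrac32 .
\]
Only the $n=0$, $\ell=0$ term is negative. By the assumed ordering $E_{0,1}<E_{1,0}<E_{0,2}$, every other gap is at least $E_{1,0}-E_{0,1}$. This gives
\[
\tfrac32\ge (E_{1,0}-E_{0,1})\bigl(\langle r^2\rangle_{0,1}-\tfrac13 I_0^2\bigr)-\tfrac13(E_{0,1}-E_{0,0})I_0^2 ,
\]
that is,
\[
\langle r^2\rangle_{0,1}\le \frac{\tfrac32+\tfrac13(E_{1,0}-E_{0,0})I_0^2}{E_{1,0}-E_{0,1}} .
\]
Inserting the bound on $I_0^2$ yields exactly
\[
\frac{3+(E_{1,0}-E_{0,0})/(E_{0,1}-E_{0,0})}{2(E_{1,0}-E_{0,1})} .
\]
The one thing to check carefully is that the angular factor $1/3$ really cancels the $3$ coming from Theorem~2.

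\emph{Lower bound.} I will use channel-resolved radial sum rules. Let $H_\ell=-\tfrac12\partial_r^2+V+\ell(\ell+1)/(2r^2)$. Commuting $r$ through $H_1$ gives the identities
\[
\sum_n (E_{n,0}-E_{0,1})I_n^2=\langle u_{0,1}|r(H_0-E_{0,1})r|u_{0,1}\rangle=\tfrac12-1=-\tfrac12 ,
\]
\[
\sum_n (E_{n,2}-E_{0,1})J_n^2=\tfrac12+2=\tfrac52 .
\]
The first identity shows $(E_{0,1}-E_{0,0})I_0^2=\tfrac12+\sum_{n\ge1}(E_{n,0}-E_{0,1})I_n^2$, so this quantity is at least $1/2$.

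The target $2/(E_{0,1}-E_{0,0})$ is four times the naive consequence $\langle r^2\rangle_{0,1}\ge I_0^2\ge 1/(2(E_{0,1}-E_{0,0}))$ of Cauchy--Schwarz. So the extra strength must come from the $\ell=2$ channel and from the excited $\ell=0$ terms. I will try to bound $\sum_n J_n^2$ from below using the $5/2$ rule together with an upper bound on the $\ell=2$ gaps. Such a bound is where Conditions~(i)--(ii) should enter, via the virial function $F=V+\tfrac12 rV'$ and the two-crossing sign argument of Theorem~1, which controls where the $\ell=2$ strength can sit.

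Strictness would then follow from Theorem~1's strict inequality $\langle r^2\rangle_{0,0}<\langle r^2\rangle_{0,1}$, since the Cauchy--Schwarz step is not saturated. I expect this lower bound to be the main obstacle. If the $\ell=2$ route fails, my fallback is to combine $\langle r^2\rangle_{0,1}>\langle r^2\rangle_{0,0}$ with a lower bound on $\langle r^2\rangle_{0,0}$ obtained from Conditions~(i)--(ii), as in the Bertlmann--Martin treatment.
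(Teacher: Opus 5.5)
Your upper bound is correct and is the paper's argument with different bookkeeping. The paper works with the oriented state $|0,1_x\rangle$: there $\hat x$ reaches $\ell=0,2$ and $\hat y,\hat z$ reach only $\ell=2$. Your channel weights $\tfrac13,\tfrac23$ do the same job. Since $|\langle 0,0|\hat x|0,1_x\rangle|^2=I_0^2/3$, the cancellation you worried about does occur. After using $E_{0,2}-E_{0,1}\ge E_{1,0}-E_{0,1}$, both routes reduce to the same intermediate bound.

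The genuine gap is the lower bound, where you have a plan but no argument. Your main route restates the problem rather than reducing it. By completeness of the $\ell=2$ radial functions, $\sum_n J_n^2=\langle r^2\rangle_{0,1}$ exactly. With your $\tfrac52$ rule, the claim is therefore equivalent to saying that the strength-weighted mean $\ell=2$ gap, $\tfrac52/\sum_n J_n^2$, lies below $\tfrac54(E_{0,1}-E_{0,0})$. The level ordering only bounds these gaps from below ($E_{n,2}\ge E_{0,2}$). That yields the opposite inequality $\sum_n J_n^2\le 5/(2(E_{0,2}-E_{0,1}))$, and you give no mechanism by which Conditions~(i)--(ii) cap the mean gap. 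Your fallback is ruled out by Theorem~2 itself: $\langle r^2\rangle_{0,0}\le 3/(2(E_{0,1}-E_{0,0}))<2/(E_{0,1}-E_{0,0})$. So no lower bound on $\langle r^2\rangle_{0,0}$, chained with $\langle r^2\rangle_{0,0}<\langle r^2\rangle_{0,1}$, can reach the target.

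The paper uses a different idea, built on Theorem~1. Since $u_{0,1}/u_{0,0}$ is strictly increasing, a Chebyshev (correlation) inequality with the positive weight $u_{0,0}u_{0,1}$ gives $\langle r^2\rangle_{0,1}>\int r^2u_{0,0}u_{0,1}\,dr/\int u_{0,0}u_{0,1}\,dr$. Then multiply the $\ell=1$ and $\ell=0$ radial equations by $r^2u_{0,0}$ and $r^2u_{0,1}$, subtract, and integrate by parts. This expresses $(E_{0,1}-E_{0,0})\int r^2u_{0,0}u_{0,1}\,dr$ through $\int u_{0,0}u_{0,1}\,dr$ and $\int rW\,dr>0$.

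If you adopt this, check the constant. With the Hartree kinetic term $-\tfrac12u''$, the identity is $(E_{0,1}-E_{0,0})\int r^2u_{0,0}u_{0,1}\,dr=\int u_{0,0}u_{0,1}\,dr+\int rW\,dr$, without the factors $2$ in (A.21). For the oscillator with $\omega=1$, $u_{0,0}=re^{-r^2/2}$ and $u_{0,1}=r^2e^{-r^2/2}$: the left side is $1$ and each integral on the right is $\tfrac12$. The factors $2$ belong to the $\hbar^2/2m=1$ convention. This route therefore proves $\langle r^2\rangle_{0,1}>1/(E_{0,1}-E_{0,0})$, which is twice your Cauchy--Schwarz bound but half the stated one.

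Because it uses neither (i)--(ii) nor the level ordering, this route cannot give the constant $2$. The unit infinite spherical well has $\langle r^2\rangle_{0,1}(E_{0,1}-E_{0,0})=\tfrac16(1+\tfrac52x^{-2})(x^2-\pi^2)\approx 1.93$, where $x\approx 4.4934$ is the first zero of $j_1$. That well violates (ii) and $E_{1,0}<E_{0,2}$, so it does not refute the theorem. Your instinct that the hypotheses must enter is sound, but the step that uses them is exactly what your proposal is missing.
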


\begin{proof}\leavevmode
\proofpart{Right-hand side}
For the upper bound, start with the \((0,1)\) state in the radial (1D)
reduction (oriented along the x-axis). The TRK sum rule for the \(x\)
component, with the same one-component normalization as
Eq.~\eqref{appA-eq-A-11}, gives
\begin{equation}
\sum_{n}(E_n - E_{0,1})|\langle n|\hat{x}|0,1_x\rangle|^2 = \frac{1}{2},
\tag{A.13}\label{appA-eq-A-13}
\end{equation}
Separating out the ground-state contribution yields
\begin{equation}
\sum_{E_n > E_{0,1}}(E_n - E_{0,1})|\langle n|\hat{x}|0,1_x\rangle|^2
= \frac{1}{2} + (E_{0,1} - E_{0,0})|\langle 0,0|\hat{x}|0,1_x\rangle|^2,
\tag{A.14}\label{appA-eq-A-14}
\end{equation}
Let \(\Delta_{\min}^{(x)}:=\inf(E_{1,0} - E_{0,1}, E_{0,2} - E_{0,1})\). Then
\begin{equation}
\Delta_{\min}^{(x)}\sum_{E_n > E_{0,1}}|\langle n|\hat{x}|0,1_x\rangle|^2
\le \frac{1}{2} + (E_{0,1} - E_{0,0})|\langle 0,0|\hat{x}|0,1_x\rangle|^2.
\tag{A.15}\label{appA-eq-A-15}
\end{equation}
For \(y\) and \(z\), the ground state is forbidden by symmetry in this
orientation, so only \(\ell=2\) states contribute:
\begin{equation}
\sum_{E_n \ge E_{0,2}}(E_n - E_{0,1})|\langle n|\hat{y}|0,1_x\rangle|^2 = \frac{1}{2},
\qquad
\sum_{E_n \ge E_{0,2}}(E_n - E_{0,1})|\langle n|\hat{z}|0,1_x\rangle|^2 = \frac{1}{2}.
\tag{A.16}\label{appA-eq-A-16}
\end{equation}
\begin{equation}
\sum_{E_n \ge E_{0,2}}|\langle n|\hat{y}|0,1_x\rangle|^2
\le \frac{1}{2\left(E_{0,2} - E_{0,1}\right)},
\qquad
\sum_{E_n \ge E_{0,2}}|\langle n|\hat{z}|0,1_x\rangle|^2
\le \frac{1}{2\left(E_{0,2} - E_{0,1}\right)}.
\tag{A.17}\label{appA-eq-A-17}
\end{equation}
\begin{sloppypar}
Using completeness,
\[
\langle r^2\rangle_{0,1}=|\langle 0,0|\hat{x}|0,1_x\rangle|^2
\;+\sum_{E_n>E_{0,1}}|\langle n|\hat{x}|0,1_x\rangle|^2
\;+\sum_{E_n\ge E_{0,2}}\left(|\langle n|\hat{y}|0,1_x\rangle|^2+|\langle n|\hat{z}|0,1_x\rangle|^2\right).
\]
\end{sloppypar}
Applying \eqref{appA-eq-A-15}, \eqref{appA-eq-A-17}, and the ground-state dipole
bound established in the proof of Theorem~2,
\[
(E_{0,1}-E_{0,0})|\langle 0,0|\hat{x}|0,1_x\rangle|^2 \le \frac{1}{2},
\]
gives
\begin{equation}
\langle r^2\rangle_{0,1}
\le |\langle 0,0|\hat{x}|0,1_x\rangle|^2
+ \frac{\frac{1}{2} + (E_{0,1}-E_{0,0})|\langle 0,0|\hat{x}|0,1_x\rangle|^2}{\Delta_{\min}^{(x)}}
+ \frac{1}{E_{0,2}-E_{0,1}}.
\tag{A.18}\label{appA-eq-A-18}
\end{equation}
Under the ordering \(E_{0,1}<E_{1,0}<E_{0,2}\), we have
\(\Delta_{\min}^{(x)}=E_{1,0}-E_{0,1}\) and
\(E_{0,2}-E_{0,1}\ge E_{1,0}-E_{0,1}\). Therefore
\begin{equation}
\begin{split}
\langle r^2\rangle_{0,1}
&\le \frac{1}{2\left(E_{0,1}-E_{0,0}\right)}
\;+\frac{2}{E_{1,0}-E_{0,1}} \\
&= \frac{1}{2\left(E_{1,0}-E_{0,1}\right)}
\left[3 + \frac{E_{1,0}-E_{0,0}}{E_{0,1}-E_{0,0}}\right].
\end{split}
\tag{A.19}\label{appA-eq-A-19}
\end{equation}
which matches the RHS bound in the theorem.

\medskip
\proofpart{Left-hand side}
For the lower bound, use the monotonicity from Theorem~1: from
\eqref{appA-eq-A-5}, the ratio \(u_{0,1}(r)/u_{0,0}(r)\) is strictly
increasing for \(r>0\). Therefore, for the increasing weight \(r^2\),
\begin{equation}
\langle r^{2}\rangle_{0,1}
= \frac{\int_{0}^{\infty} r^{2}u_{0,1}^{2}\,dr}
{\int_{0}^{\infty}u_{0,1}^{2}\,dr}
> \frac{\int_{0}^{\infty} r^{2}u_{0,0}u_{0,1}\,dr}
{\int_{0}^{\infty}u_{0,0}u_{0,1}\,dr}.
\tag{A.20}\label{appA-eq-A-20}
\end{equation}
To evaluate the ratio, use the radial equations in Hartree a.u.\ for
\(\ell=0\) and \(\ell=1\), multiply the \(\ell=1\) equation by
\(r^{2}u_{0,0}\), the \(\ell=0\) equation by \(r^{2}u_{0,1}\), subtract, and
integrate by parts. This yields
\begin{equation}
\begin{split}
(E_{0,1}-E_{0,0})\int_{0}^{\infty} r^{2}u_{0,0}u_{0,1}\,dr
= 2\int_{0}^{\infty} u_{0,0}u_{0,1}\,dr \\
{}+ 2\int_{0}^{\infty} r\left(u_{0,1}'u_{0,0}-u_{0,0}'u_{0,1}\right)\,dr.
\end{split}
\tag{A.21}\label{appA-eq-A-21}
\end{equation}
The last integrand is strictly positive because \(u_{0,1}/u_{0,0}\) is
increasing. Hence
\begin{equation}
\begin{split}
(E_{0,1}-E_{0,0})\int_{0}^{\infty} r^{2}u_{0,0}u_{0,1}\,dr
> 2\int_{0}^{\infty} u_{0,0}u_{0,1}\,dr.
\end{split}
\tag{A.22}\label{appA-eq-A-22}
\end{equation}
Combining \eqref{appA-eq-A-20} and \eqref{appA-eq-A-22} gives the lower bound
\begin{equation}
\langle r^{2}\rangle_{0,1} \;>\; \frac{2}{E_{0,1}-E_{0,0}}.
\tag{A.23}\label{appA-eq-A-23}
\end{equation}
This proves the stated strict lower bound.
\end{proof}
% ---------- end Appendix A third subheading ----------

\refstepcounter{section}
\label{app:B}
\addcontentsline{toc}{section}{Appendix \Alph{section}}
\section*{Appendix \Alph{section}}\par
\subsection{Moment Completion and Analytic Continuation}
\label{sec:B}

\noindent While the generalized Bertlmann--Martin inequalities (GBMs) provide
a robust framework for reconstructing even moments, the Laplace-transform
pipeline requires a complete sequence of both even and odd moments to recover
the potential accurately in the reported benchmark setting. This appendix
explains why specific interpolation and continuation techniques---namely
moment-sequence completion and Pad\'e approximants---are necessary to bridge
the gap between discrete spectral data and continuous radial reconstruction.

The Bertlmann--Martin (BM) limitation relevant to odd-moment interpolation is
not that odd moments,
defined as
\[
\mu_n:=\langle r^n\rangle_{0,0}=\int_0^\infty r^{n+2}\,\rho_{0,0}(r)\,dr
=\int_0^\infty r^n\,\chi_{0,0}^2(r)\,dr
\]
vanish; they are physically positive
(e.g., \(\langle r\rangle>0\)). Rather, the limitation is structural: the BM
framework builds bounds from the positivity of energy--weighted spectral sums
involving multipole operators \(Q_{\ell m}=r^\ell Y_{\ell m}(\hat{\mathbf r})\)
and nested commutators with \(H\). After angular summation, the resulting scalar
constraints couple even multipole ranks and, consequently, even powers of \(r\).
These BMI-type recursions link \(\mu_{2k}\) to \(\mu_{2k-2}\) (and spectral
data) but contain no terms that connect to the odd subsequence
\(\{\mu_1,\mu_3,\dots\}\). Therefore, the odd moments must be supplied by an
auxiliary ansatz or interpolation.
In the canonical benchmark set reported here, that auxiliary family is
Pad\'e-complete for Coulomb, Hulth\'en, and Kratzer, and maximum-entropy for
HO and the hyperbolic molecular well.

Analytic continuation via Pad\'e approximants is needed because, when the
Laplace transform \(L(q)\) is constructed from finitely many moments,
the resulting power series
\[
L(q)=\sum_{n=0}^N a_n q^n + \mathcal O(q^{N+1}),\quad a_n=(-1)^n\mu_n/n!,
\]
provides only local Taylor data about \(q=0\). For a successful inversion
pipeline, the representation must remain stable over a wider \(q\)-range than
this moment-controlled neighborhood. The practical limitation is not that every
benchmark must have a finite radius of convergence: depending on the underlying
density, the exact transform may have nearby complex singularities or may be
entire. We replace the local series with a Pad\'e rational approximant
\(P(N,D)(q)=P_N(q)/Q_D(q)\) \cite{BakerGravesMorris1996PadeApproximants}, which
matches the known coefficients up to order \(N+D\) while providing a rational
continuation. This approach allows the transform, when singular structure is
present, to mimic singular features through poles and zeros, pushing past the
moment-limited window to recover \(r\)-space features without the severe bias
or instability seen in raw truncated series.

\refstepcounter{section}
\label{app:C}
\addcontentsline{toc}{section}{Appendix \Alph{section}}
\label{appendix:lsq_method}

\section*{Appendix \Alph{section}}\par
\subsection{The Least-Squares Variational Baseline}
\label{sec:C.1}
While the Laplace-based method utilizes a moment-driven pipeline, we assess its
performance against a more traditional spectral-fitting technique. This
appendix details the R{\"o}hrl-style Least Squares (LSQ) approach
\cite{Roehrl2006RecoveringBC, Roehrl2011LeastSquares},
a variational baseline that reconstructs the potential through direct
minimization of spectral mismatches. By comparing our results to this
established framework, we can better evaluate the efficiency and accuracy of
the GBM-driven reconstruction under shared benchmark conditions.

\subsection{The LSQ Functional}
\label{sec:C.2}
The R{\"o}hrl-style LSQ approach provides a variational baseline for
reconstructing the potential \(q(x)\) in the classical non-radial one-dimensional
Sturm--Liouville equation
\begin{equation}
  -u''(x) + q(x)u(x) = \lambda u(x), \quad x \in [0,1],
  \label{eq:sl_eq}
\end{equation}
from two spectra corresponding to different separated boundary conditions. Let
\((\alpha,\beta)\) and \((\alpha,\gamma)\) denote two sets of boundary angles,
with \(\sin(\beta-\gamma)\neq0\), and let \(\lambda_{q,i,n}\) be the \(n\)-th
eigenvalue for boundary condition set \(i\in\{1,2\}\). Borg's and Levinson's
theorems guarantee uniqueness of \(q(x)\) given the full two spectra. Given
partial spectral data \(\{\lambda_{Q,i,n}\}_{(i,n)\in I}\) from an unknown
potential \(Q(x)\), the method minimizes the functional
\begin{equation}
  G[q] = \sum_{(i,n)\in I} \omega_{i,n} \left( \lambda_{q,i,n} - \lambda_{Q,i,n} \right)^2,
  \label{eq:lsq_functional}
\end{equation}
where \(\omega_{i,n}>0\) are user-defined weights (often
\(\omega_{i,n}=1\)). The functional satisfies \(G[q]\geq0\) and \(G[q]=0\) if
and only if \(\lambda_{q,i,n}=\lambda_{Q,i,n}\) for all \((i,n)\in I\).

\subsection{Gradient Descent and Convexity}
\label{sec:C.3}
Let \(g_{q,i,n}(x)\) denote the normalized eigenfunction corresponding to
\(\lambda_{q,i,n}\). The Fréchet derivative of the eigenvalue in direction
\(h(x)\) is
\begin{equation}
  \frac{\partial}{\partial q} \lambda_{q,i,n}[h] = \int_{0}^{1} h(x)\, g_{q,i,n}^2(x) \, dx.
  \label{eq:eig_derivative}
\end{equation}
allowing the gradient of \(G\) to be written as
\begin{equation}
  \nabla G[q](x) = 2 \sum_{(i,n)\in I} \omega_{i,n} \left( \lambda_{q,i,n} - \lambda_{Q,i,n} \right) g_{q,i,n}^2(x).
  \label{eq:lsq_gradient}
\end{equation}
The squared eigenfunctions \(\{g_{q,i,n}^{2}\}\) are linearly independent in
\(H^1[0,1]\). In an idealized formulation, \(\nabla G[q]=0\) if and only if
\(G[q]=0\), supporting gradient-based minimization as a principled baseline.
Finite-grid and finite-data implementations still require empirical convergence
checks and sensitivity audits.

\subsection{Numerical Minimization Procedure}
\label{sec:C.4}
In practice, \(G[q]\) is minimized via the Polak--Ribière conjugate gradient
algorithm through a structured iterative process. The routine begins by
initializing a trial potential \(q^{(0)}(x)\), which is frequently set to
\(q^{(0)}\equiv0\). At each iteration \(k\), the algorithm computes the
eigenvalues and eigenfunctions for both boundary condition sets to evaluate the
functional \(G[q^{(k)}]\) and its corresponding gradient \(\nabla G[q^{(k)}]\).
A search direction is then determined using conjugate gradients, followed by an
Armijo line search to identify the optimal step length \(\alpha_k\). Finally,
the potential is updated according to \(q^{(k+1)}=q^{(k)}+\alpha_k d_k\), and
the cycle repeats until the objective functional falls below a predefined
tolerance or the maximum iteration count is reached.

\begingroup
% April 30 coauthor numbering note: render these Appendix C support floats in
% the whole-paper table/figure sequence while leaving Appendix equations local.
\renewcommand{\thetable}{\arabic{table}}
\renewcommand{\thefigure}{\arabic{figure}}
\renewcommand*{\theHtable}{appendix.C.global.\arabic{table}}
\renewcommand*{\theHfigure}{appendix.C.global.\arabic{figure}}
\setcounter{table}{1}
\setcounter{figure}{10}
\setlength{\intextsep}{2pt}
\setlength{\textfloatsep}{2pt}
\setlength{\floatsep}{2pt}
\setlength{\abovecaptionskip}{2pt}
\setlength{\belowcaptionskip}{0pt}

\begin{table}[H]
\centering
\caption{Benchmark comparison between the R{\"o}hrl-style LSQ baseline and Laplace (GBM) on the Coulomb benchmark.}
% Shared Coulomb comparison table (RMP appendix).
\begin{tabular}{lcc}
  \hline
  \textbf{Metric} & \textbf{R{\"o}hrl-style LSQ} & \textbf{Laplace (GBM)} \\
  \hline
  Iterations & 300 & -- \\
  Elapsed time (s) & 7.433442e+01 & 1.259797e+01 \\
  L2 (q\_rec, q\_true) & 6.417976e-04 & 9.586654e-04 \\
  RMSE eigenvalues (DD) & 2.543589e-05 & 2.019329e-04 \\
  RMSE eigenvalues (DN) & 3.594460e-05 & 2.016194e-04 \\
  Final F & 4.910380e-10 & -- \\
  Final eig. misfit (both sets) & 8.703796e-05 & 2.553403e-04 \\
  \hline
\end{tabular}

\end{table}

\begin{figure}[H]
  \centering
  \rmpincludegraphics[width=0.78\linewidth]{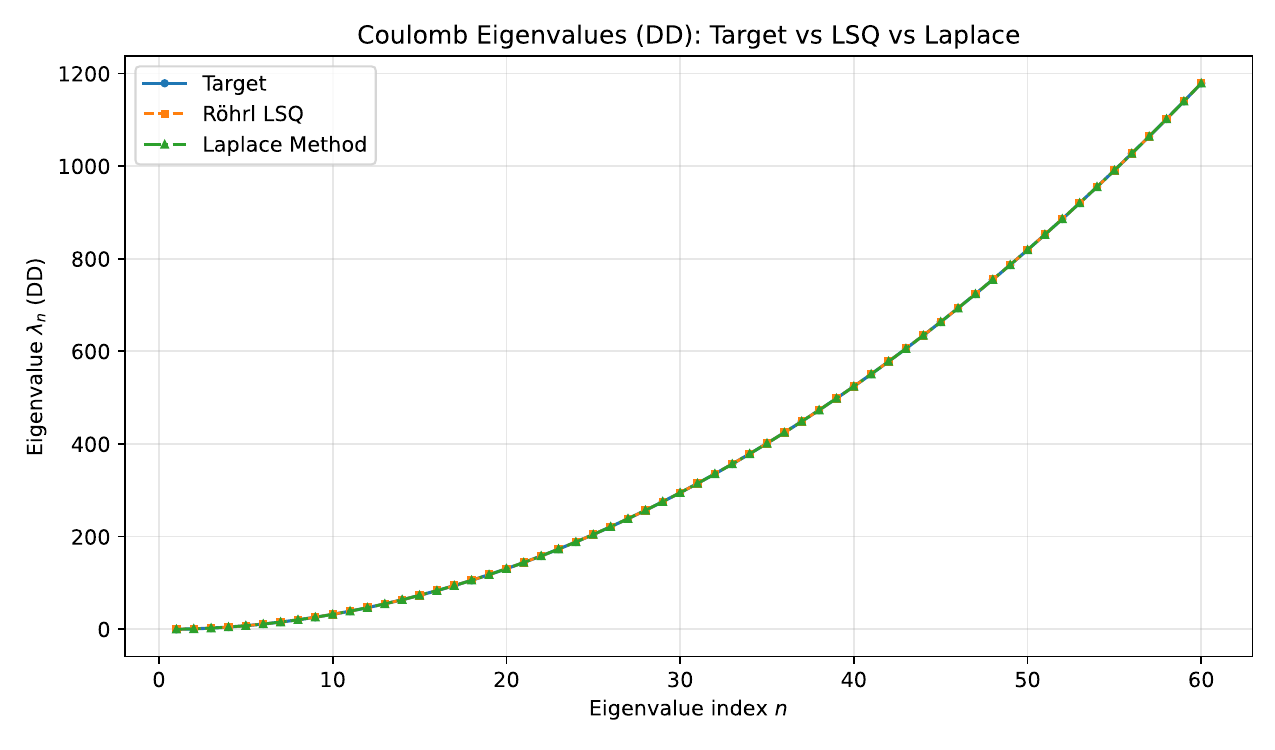}
  \caption{Coulomb DD eigenvalue fit for the R{\"o}hrl-style LSQ baseline.}
  \label{fig:C_DD_LS}
\end{figure}

\begin{figure}[H]
  \centering
  \rmpincludegraphics[width=0.78\linewidth]{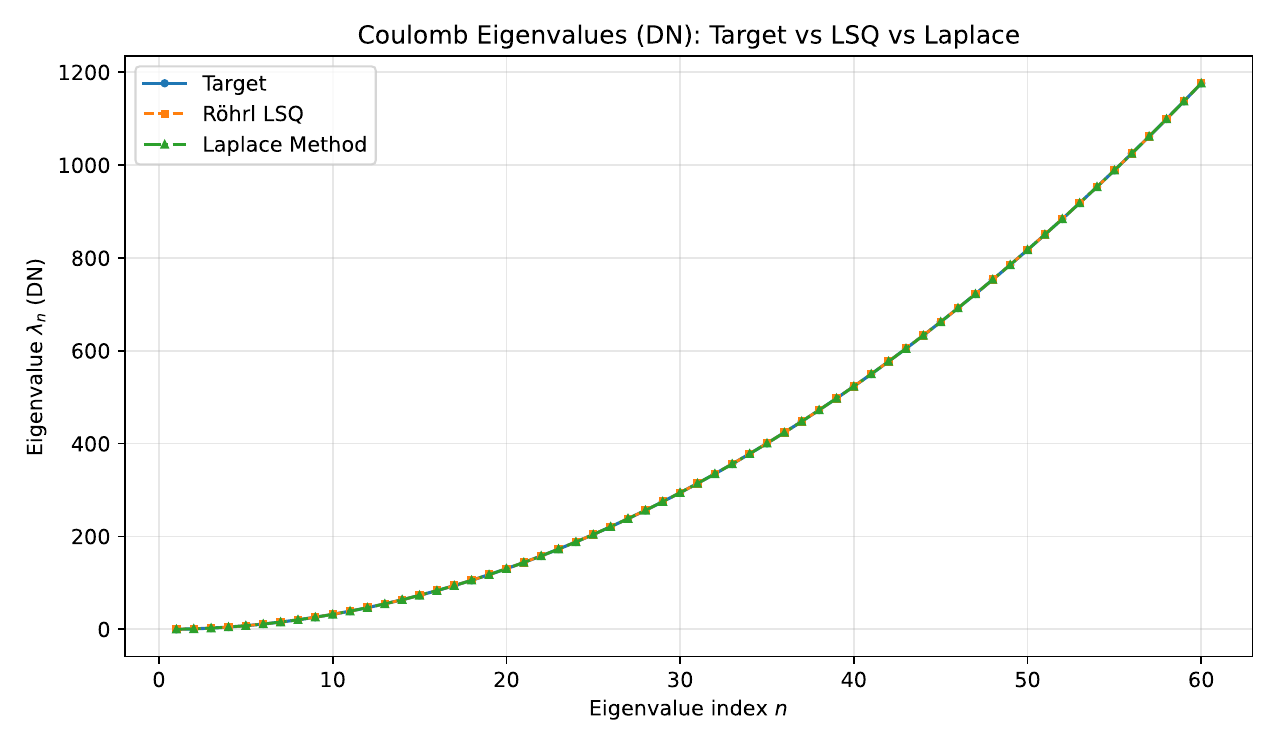}
  \caption{Coulomb DN eigenvalue fit for the R{\"o}hrl-style LSQ baseline.}
  \label{fig:C_DN_LS}
\end{figure}
\endgroup

\subsection{Implementation in the Benchmark Script}
\label{sec:C.5}
The benchmarking framework implements the LSQ functional using a discrete grid
representation of the potential \(q(x)\). It utilizes a finite-difference
assembly of the discrete Sturm--Liouville operator to handle both
Dirichlet--Dirichlet and Dirichlet--Neumann boundary conditions. The script
performs direct diagonalization to obtain the eigenvalues \(\lambda_{q,i,n}\)
and the eigenfunctions \(g_{q,i,n}\) required for gradient evaluation according
to Eq.~\eqref{eq:lsq_gradient}. Additionally, the implementation allows for an
optional regularization term \(\mu\int (q'(x))^2\,dx\) to ensure that the
reconstructed potential remains smooth.

\subsection{Role as a Benchmark}
\label{sec:C.6}
The LSQ method serves as a reproducible reference baseline against the
Laplace-transform-based inverse method developed in this work. Unlike the
Laplace approach, which reconstructs the radial potential \(V(r)\) from
ground-state density moments constrained by Bertlmann--Martin inequalities, the
LSQ method directly fits the spectral data in a two-spectra setting. It is a
valuable benchmark because it targets the exact potential through direct
spectral-misfit minimization and provides a complementary reconstruction family
with distinct assumptions and failure modes. Furthermore, it requires no
assumptions regarding moment sequences or analytic continuation. Comparative
conclusions are drawn from the reported metrics and figures for the stated
benchmark settings. For benchmark-count comparisons, the LSQ method is treated
as having 120 total constraints (60 DD + 60 DN), while the Laplace/GBM counts
refer only to the recurrence-consumed subset of bound-state inputs.

\begingroup
% April 30 coauthor numbering note: render these Appendix C support floats in
% the whole-paper table/figure sequence while leaving Appendix equations local.
\renewcommand{\thefigure}{\arabic{figure}}
\renewcommand*{\theHfigure}{appendix.C.global.\arabic{figure}}
\setcounter{figure}{12}
\setlength{\intextsep}{2pt}
\setlength{\textfloatsep}{2pt}
\setlength{\floatsep}{2pt}
\setlength{\abovecaptionskip}{2pt}
\setlength{\belowcaptionskip}{0pt}

\begin{figure}[H]
  \centering
  \rmpincludegraphics[width=0.9\linewidth]{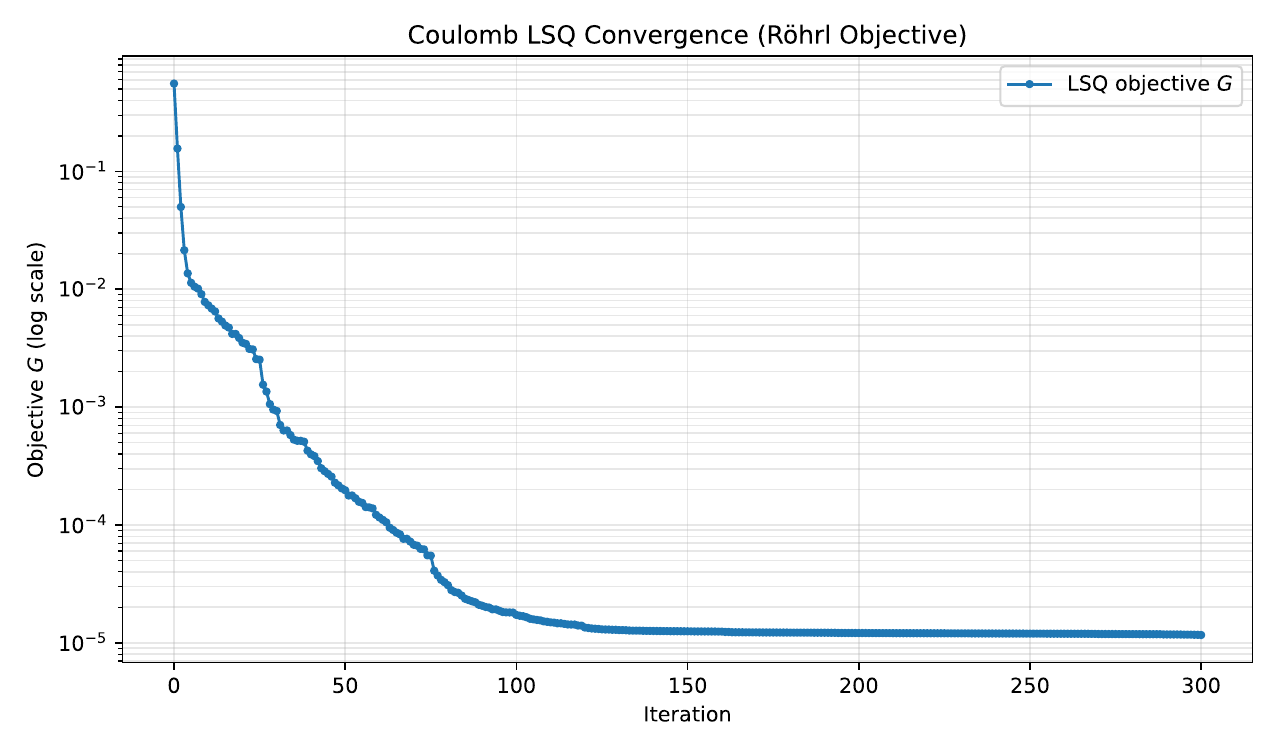}
  \caption{Coulomb LSQ convergence for the R{\"o}hrl-style two-spectra inverse Sturm--Liouville functional. The curve plots the objective \(F(c)=G(c)+\mu R(c)\) versus nonlinear conjugate-gradient (Polak--Ribière) iteration, where \(G\) is the weighted sum of squared DD/DN eigenvalue mismatches and \(R\) is an optional smoothing penalty on \(q'\).}
  \label{fig:C_LS_Conv}
\end{figure}
\endgroup

%% --- References ---
\bibliographystyle{ws-rmp}
\bibliography{citations_rmp}

\begin{thebibliography}{10}

\bibitem{DoerflerHochbruckKoehlerRiederSchnaubeltWieners2023}
W.~D{\"o}rfler, M.~Hochbruck, J.~K{\"o}hler, A.~Rieder, R.~Schnaubelt and
  C.~Wieners, {\em Wave Phenomena: Mathematical Analysis and Numerical
  Approximation}, Oberwolfach Seminars, Vol.~49 (Birkh{\"a}user, Cham, 2023).

\bibitem{Ambarzumian}
V.~Ambarzumian, {{\"U}ber eine Frage der Eigenwerttheorie}, {\em Zeitschrift
  f{\"u}r Physik} {\bf 53}(9-10)  (1929)  690--695.

\bibitem{Borg}
G.~Borg, {Eine Umkehrung der Sturm-Liouvilleschen Eigenwertaufgabe: Bestimmung
  der Differentialgleichung durch die Eigenwerte}, {\em Acta Mathematica} {\bf
  78}  (1946)  1--96.

\bibitem{Levinson}
N.~Levinson, The inverse {Sturm-Liouville} problem, {\em Matematisk Tidsskrift
  B} {\bf 1949}  (1949)  25--30.

\bibitem{Tikhonov}
A.~N. Tikhonov, {{\"U}ber die Eindeutigkeit der L{\"o}sung des Problems der
  Elektro-Sch{\"u}rfung}, {\em Doklady Akademii Nauk SSSR, Novaya Seriya} {\bf
  69}  (1949)  797--800.

\bibitem{Levitan1987InverseSturmLiouville}
B.~M. Levitan, {\em Inverse {Sturm--Liouville} Problems} (VSP, Zeist, 1987).
\newblock Translated from the Russian by O. Efimov.

\bibitem{FreilingYurko2001InverseSturmLiouville}
G.~Freiling and V.~Yurko, {\em Inverse {Sturm--Liouville} Problems and Their
  Applications} (Nova Science Publishers, Huntington, NY, 2001).

\bibitem{MamedovCetinkaya2013SpectralParameter}
K.~R. Mamedov and F.~A. Cetinkaya, Inverse problem for a class of
  {Sturm--Liouville} operator with spectral parameter in boundary condition,
  {\em Boundary Value Problems} {\bf 2013}  (2013) p. 183.

\bibitem{Bondarenko2015MatrixHalfLine}
N.~Bondarenko, An inverse spectral problem for the matrix {Sturm--Liouville}
  operator on the half-line, {\em Boundary Value Problems} {\bf 2015}  (2015)
  p.~15.

\bibitem{Chen2016InteriorTransmission}
L.-H. Chen, Inverse uniqueness in interior transmission problem and its
  eigenvalue tunneling in simple domain, {\em Advances in Mathematical Physics}
  {\bf 2016}  (2016) p. 2438253.

\bibitem{ButerinSat2017HalfInverseIntegro}
S.~A. Buterin and M.~Sat, On the half inverse spectral problem for an
  integro-differential operator, {\em Inverse Problems in Science and
  Engineering} {\bf 25}(10)  (2017)  1508--1518.

\bibitem{DelgadoKhmelnytskayaKravchenko2019}
B.~B. Delgado, K.~V. Khmelnytskaya and V.~V. Kravchenko, The transmutation
  operator method for efficient solution of the inverse {Sturm--Liouville}
  problem on a half-line, {\em Mathematical Methods in the Applied Sciences}
  {\bf 42}(18)  (2019)  7359--7366.

\bibitem{ZhangBondarenkoYang2021Discontinuous}
R.~Zhang, N.~P. Bondarenko and C.~F. Yang, Solvability of an inverse problem
  for discontinuous {Sturm--Liouville} operators, {\em Mathematical Methods in
  the Applied Sciences} {\bf 44}(1)  (2021)  124--139.

\bibitem{VladicicBoskovicVojvodic2022Delay}
V.~Vladi{\v c}i{\'c}, M.~Bo{\v s}kovi{\'c} and B.~Vojvodi{\'c}, Inverse
  problems for {Sturm--Liouville}-type differential equation with a constant
  delay under {Dirichlet}/polynomial boundary conditions, {\em Bulletin of the
  Iranian Mathematical Society} {\bf 48}(4)  (2022)  1829--1843.

\bibitem{WeiHuXiang2024Reconstruction}
Z.~Wei, Z.~Hu and Y.~Xiang, Reconstruction of the solution of inverse
  {Sturm--Liouville} problem, {\em Boundary Value Problems} {\bf 2024}  (2024)
  p.~55.

\bibitem{GawishMansour2025QSturmLiouville}
F.~A. Gawish and Z.~S. Mansour, On uniqueness theorems for the inverse
  {$q$}-{Sturm--Liouville} problems, {\em Quaestiones Mathematicae} {\bf 48}(7)
   (2025)  1065--1096.

\bibitem{FeizmohammadiKian2026PartialBoundary}
A.~Feizmohammadi and Y.~Kian, Recovery of {Sturm--Liouville} operators from
  partial boundary spectral data and applications  (2026), Annales Henri
  Poincar{\'e}, online first.

\bibitem{GelfandLevitan}
I.~M. Gel'fand and B.~M. Levitan, On the determination of a differential
  equation from its spectral function, {\em American Mathematical Society
  Translations, Series 2} {\bf 1}  (1955)  253--304.

\bibitem{Marchenko}
V.~A. Marchenko, {Some questions of the theory of one-dimensional linear
  differential operators of the second order. I, II}, {\em Trudy Moskovskogo
  Matematicheskogo Obshchestva} {\bf 1--2}  (1952--1953)  327--420; 3--83.

\bibitem{HronRazavy1983GLMApplications}
M.~Hron and M.~Razavy, Some applications of the {Gel'fand--Levitan} inverse
  method in atomic and molecular physics, {\em International Journal of Quantum
  Chemistry} {\bf 24}(1)  (1983)  97--111.

\bibitem{Habashy1991GeneralizedGLM}
T.~M. Habashy, A generalized {Gel'fand--Levitan--Marchenko} integral equation,
  {\em Inverse Problems} {\bf 7}(5)  (1991)  703--711.

\bibitem{KravchenkoTorba2021DirectMethod}
V.~V. Kravchenko and S.~M. Torba, A direct method for solving inverse
  {Sturm--Liouville} problems, {\em Inverse Problems} {\bf 37}(1)  (2021) p.
  015015.

\bibitem{KravchenkoVicenteBenitez2022TransmutationII}
V.~V. Kravchenko and V.~A. Vicente-Ben{\'i}tez, Transmutation operators method
  for {Sturm--Liouville} equations in impedance form {II}: Inverse problem,
  {\em Journal of Mathematical Sciences} {\bf 266}(4)  (2022)  554--575.

\bibitem{BaevMozgovykh2025MethodSolving}
A.~V. Baev and V.~V. Mozgovykh, A method for solving the inverse
  {Sturm--Liouville} problem, {\em Moscow University Computational Mathematics
  and Cybernetics} {\bf 49}(3)  (2025)  173--181.

\bibitem{Chadan}
K.~Chadan and P.~C. Sabatier, {\em Inverse Problems in Quantum Scattering
  Theory} (Springer, 2012).

\bibitem{Newton}
R.~G. Newton, {\em Scattering Theory of Waves and Particles}, 2nd edn.
  (Springer, 2013).

\bibitem{KravchenkoShishkinaTorba2020InverseScattering}
V.~V. Kravchenko, E.~L. Shishkina and S.~M. Torba, A transmutation operator
  method for solving the inverse quantum scattering problem, {\em Inverse
  Problems} {\bf 36}(12)  (2020) p. 125007.

\bibitem{GradusovYakovlev2023InverseSquareScattering}
V.~A. Gradusov and S.~L. Yakovlev, On the scattering problem for a potential
  decreasing as the inverse square of distance, {\em Theoretical and
  Mathematical Physics} {\bf 217}(2)  (2023)  1777--1787.

\bibitem{Gibson2026ExplicitInverseScattering}
P.~C. Gibson, Explicit inverse scattering for the one-dimensional
  {Schr{\"o}dinger} equation, {\em Mathematical Models and Methods in Applied
  Sciences} {\bf 36}(4)  (2026)  751--785.

\bibitem{FabianoKnobelLowe1995FiniteDifference}
R.~H. Fabiano, R.~Knobel and B.~D. Lowe, A finite-difference algorithm for an
  inverse {Sturm--Liouville} problem, {\em IMA Journal of Numerical Analysis}
  {\bf 15}(1)  (1995)  75--88.

\bibitem{MarlettaWeikard2005WeakStability}
M.~Marletta and R.~Weikard, Weak stability for an inverse {Sturm--Liouville}
  problem with finite spectral data and complex potential, {\em Inverse
  Problems} {\bf 21}(4)  (2005)  1275--1290.

\bibitem{FreilingMazurYurko2007Singular}
G.~Freiling, T.~Mazur and V.~Yurko, A numerical algorithm for solving inverse
  problems for singular {Sturm--Liouville} operators, {\em Advances in
  Dynamical Systems and Applications} {\bf 2}(1)  (2007)  95--105.

\bibitem{Andrew2011FiniteDifference}
A.~L. Andrew, Finite difference methods for half inverse {Sturm--Liouville}
  problems, {\em Applied Mathematics and Computation} {\bf 218}(2)  (2011)
  445--457.

\bibitem{GaoHuangCheng2015FiniteDifference}
Q.~Gao, Z.~Huang and X.~Cheng, A finite difference method for an inverse
  {Sturm--Liouville} problem in impedance form, {\em Numerical Algorithms} {\bf
  70}(3)  (2015)  669--690.

\bibitem{Kravchenko2020DirectInverseBook}
V.~V. Kravchenko, {\em Direct and Inverse {Sturm--Liouville} Problems: A Method
  of Solution} (Springer, 2020).

\bibitem{Bondarenko2022FrozenArgument}
N.~P. Bondarenko, Finite-difference approximation of the inverse
  {Sturm--Liouville} problem with frozen argument, {\em Applied Mathematics and
  Computation} {\bf 413}  (2022) p. 126653.

\bibitem{CetinkayaKhmelnytskayaKravchenko2024NSBF}
F.~A. {\c C}etinkaya, K.~V. Khmelnytskaya and V.~V. Kravchenko, Neumann series
  of {Bessel} functions for inverse coefficient problems, {\em Mathematical
  Methods in the Applied Sciences} {\bf 47}(16)  (2024)  12373--12387.

\bibitem{Kravchenko2025WeylHalfLine}
V.~V. Kravchenko, Reconstruction techniques for inverse {Sturm--Liouville}
  problems with complex coefficients, {\em Mathematical Methods in the Applied
  Sciences} {\bf 48}(17)  (2025)  15875--15889.

\bibitem{Bertlmann}
R.~A. Bertlmann and A.~Martin, Inequalities on heavy quark--antiquark systems,
  {\em Nuclear Physics B} {\bf 168}  (1980)  111--136.

\bibitem{Mezhoud2003GBM}
R.~Mezhoud, F.-Z. Ighezou, A.~Chouchaoui, A.~T. Kerris and R.~J. Lombard,
  {Generalized Bertlmann--Martin Inequalities and Power-Law Potentials}, {\em
  Annals of Physics} {\bf 308}(1)  (2003)  143--155.

\bibitem{Yekken}
R.~Yekken, F.-Z. Ighezou and R.~J. Lombard, The inverse problem in the case of
  bound states, {\em Annals of Physics} {\bf 323}(1)  (2008)  61--81.

\bibitem{Mezhoud}
R.~Mezhoud, I.~Ami and R.~J. Lombard, The use of {Laplace} transform in the
  inverse problem from bound states, {\em Romanian Reports in Physics} {\bf
  75}(3)  (2023) p. 114.

\bibitem{deHoogKnightStokes1982}
F.~R. {de Hoog}, J.~H. Knight and A.~N. Stokes, An improved method for
  numerical inversion of {Laplace} transforms, {\em SIAM Journal on Scientific
  and Statistical Computing} {\bf 3}(3)  (1982)  357--366.

\bibitem{AbateWhitt1995}
J.~Abate and W.~Whitt, Numerical inversion of {Laplace} transforms of
  probability distributions, {\em ORSA Journal on Computing} {\bf 7}(1)  (1995)
   36--43.

\bibitem{Stieltjes1894FractionsContinues}
T.~J. Stieltjes, Recherches sur les fractions continues, {\em Annales de la
  Facult\'e des sciences de Toulouse : Math\'ematiques} {\bf 8}(4)  (1894)
  J1--J122, S\'erie 1.

\bibitem{Roehrl2006RecoveringBC}
N.~R{\"o}hrl, Recovering boundary conditions in inverse {Sturm-Liouville}
  problems, in {\em Recent Advances in Differential Equations and Mathematical
  Physics, Contemporary Mathematics, Vol.~412\/}  (American Mathematical
  Society, 2006), pp. 263--270.

\bibitem{Roehrl2011LeastSquares}
N.~R{\"o}hrl, A least-squares functional for solving inverse {Sturm-Liouville}
  problems, {\em Inverse Problems} {\bf 21}(6)  (2005)  2009--2017.

\bibitem{BakerGravesMorris1996PadeApproximants}
G.~A. Baker and P.~Graves-Morris, {\em Pad{\'e} Approximants}, Encyclopedia of
  Mathematics and its Applications, Vol.~59, 2 edn. (Cambridge University
  Press, 1996).

\end{thebibliography}

\end{document}